\newtheorem{thm}{Theorem}[section]
\newtheorem{corollary}[thm]{Corollary}
\newtheorem{lem}[thm]{Lemma}
\newtheorem{proposition}[thm]{Proposition}
\numberwithin{equation}{section}
\newtheorem{lemma}[thm]{\bf Lemma}
\newtheorem{definition}[thm]{\bf Definition}
\begin{document}
%------------------------------------------------------------------------------------%

%------------------------------------------------------------------------------------%
%\title{The Signed Roman Domination Number Of Some Graphs}
%\author{ Ali Behtoei$^{~\!*}$, Ebrahim Vatandoost, Fezzeh Azizi Rajol Abad}
\title{}
\author{}
\begin{center}
{\bf \Large Signed Roman Domination Number and Join of Graphs}

\vspace*{5mm}
{   Ali Behtoei$^*$, Ebrahim Vatandoost, Fezzeh Azizi Rajol Abad}

{ a.behtoei@sci.ikiu.ac.ir,~~e-vatandoost@ikiu.ac.ir,~~vf.azizi66@gmail.com}

{Department of Mathematics, Imam Khomeini International University,}

{ P.O. Box 34149-16818, Qazvin, Iran}
\end{center}
\thanks{{\scriptsize
\hskip -0.4 true cm MSC(2010): Primary: 05C69 ; Secondary: 05C78.
\newline Keywords: Domination, Signed Roman domination, Join, Cycle, Wheel, Fan, Friendship.\\
%Received: 27 Auguste 2014, Accepted: dd mmmm yyyy.\\
$*$ Corresponding author}}
\maketitle

%------------------------------------------------------------------------------------%
%------------------------------------------------------------------------------------%

\begin{abstract}
A signed Roman dominating function (simply, a ``SRDF") on a graph $G=(V,E)$ is a
function  $f:V(G)\rightarrow \{-1,1,2\}$  satisfying the conditions that (i) the sum of its function
values over any closed neighborhood is at least one and (ii) each vertex $x$ for
which $f(x)=-1$ is adjacent to at least one vertex $y$ for which $f(v)=2$. The weight
of a SRDF is the sum of its function values over all vertices. The signed Roman
domination number of $G$, denoted by $\gamma_{_{sR}}(G)$, is the minimum weight of a SRDF on $G$.
In this paper we study the signed Roman domination number of the join of graphs.
We determine it for the join of cycles,
wheels, fans, and friendship  graphs. 
\end{abstract}

\vskip 0.2 true cm

%------------------------------------------------------------------------------------%

\pagestyle{myheadings}
\markboth{\rightline {\sl Signed Roman Domination Number and Join of Graphs \hskip 4 cm  A. Behtoei, E. Vatandoost, F. Azizi   }}
         {\leftline{\sl Signed Roman Domination Number and Join of Graphs \hskip 4 cm  A. Behtoei, E. Vatandoost, F. Azizi }}

\bigskip
\bigskip

%------------------------------------------------------------------------------------%
%------------------------------------------------------------------------------------%

\section{\bf Introduction}
\vskip 0.4 true cm

Throughout this paper we consider (non trivial) simple graphs, that are finite and undirected graphs without loops or multiple edges.
 Let $G=(V (G),E(G))$ be a connected graph of order $n=|V(G)|$ and of size $m=|E(G)|$.
%The distance between two vertices $u$ and $v$ is denoted by  $d_G(u,v)$  and is the length of a shortest path between $u$ and $v$ in $G$.
%The diameter of $G$ is $\max\{d_G(u,v):~u,v\in V(G)\}$. It is well known that almost all graphs have diameter two.
When $x$ is a vertex of $G$, then the open neighborhood of $x$ in $G$ is the set $N_{_G}(x)=\{y:~xy\in E(G)\}$
and the closed neighborhood of $x$ in $G$ is the set $N_{_G}[x] = N_{_G}(x)\cup\{x\}$.
The degree of vertex $x$  is the number of edges adjacent to $x$ and is denoted by $\deg_G(x)$ .
%A graph is said to be regular if all of its vertices have the same degree.
%The notations used in this article are mainly taken from the book  ??? 
The minimum degree and the maximum degree of $G$ are denoted by $\Delta(G)$ and $\delta(G)$, respectively. 
%A graph is called $k$-regular if each vertex  of it has degree $k$. 
%A subset $S$ of $V(G)$ is {\bf independent} if there exist no two adjacent vertices in $S$. 
%The {\bf independent number} of $G$ is the maximum size of an independent set in $G$.

%We write $K_n$ for the complete graph of order $n$ and $C_n$ for a cycle of length $n$.

A set $D\subseteq V(G)$ is called a {\bf dominating set} of $G$ if each vertex outside $D$ has at least one neighbor in $D$. 
The minimum cardinality of a dominating set of $G$ is the {\bf domination number} of $G$ and is denoted by $\gamma(G)$.
For example, the domination numbers of the $n$-vertex complete graph, path, and cycle are given by $\gamma(K_n)=1$, $\gamma(P_n)=\lceil{n\over3}\rceil$ and $\gamma(C_n)=\lceil{n\over3}\rceil$, respectively \cite{Hynes}.
Domination is a rapidly developing area of research in graph theory, and its
various applications to ad hoc networks, distributed computing, social networks, biological networks
and web graphs partly explain the increased interest. The concept
of domination has existed and studied for a long time and early discussions on the topic
can be found in the works of  Berge \cite{Berge} and Ore \cite{Ore}.
At present, domination is considered to be one of the fundamental concepts in
graph theory with an extensive research activity.
Garey and Johnson \cite{complexity} have shown that determining the domination number of an arbitrary graph is an NP-complete problem.
The domination number can be defined equivalently by means of a function, which can be considered as a characteristic function of a dominating set, see \cite{Hynes}. 
A function $f:V(G)\rightarrow \{0,1\}$ is called a {\bf dominating function} on  $G$ if for
each vertex $x\in V(G)$, $\sum_{y\in N_G[x]}f(y)\geq 1$. 
The value $w(f)=\sum_{x\in V(G)}f(x)$ is called the {\bf weight} of $f$. 
Now, the domination number of $G$ can be defined as $$\gamma(G)=\min\{w(f):~f ~\mbox{is a domination function on}~ G\}.$$

Analogously, a {\bf signed domination function} of $G$ is a labeling of the vertices
of $G$ with $+1$ and $-1$ such that the closed neighborhood of each vertex
contains more $+1$'s than $-1$'s. The {\bf signed domination
number} of $G$ is the minimum value of the sum of vertex labels, taken over all
signed domination functions of $G$. This concept is closely related to combinatorial
discrepancy theory as shown by F\"{u}redi and Mubayi in \cite{Furedi}.
In general, many domination parameters are defined by combining domination
with other graph theoretical properties.

\begin{definition}  \label{SRDF}
\cite{Ahangar}  
Let $G=(V,E)$ be a graph. A {\bf signed Roman domination function} (simply, a ``SRDF") on the graph $G$ is a function $f:V\rightarrow \{-1,1,2\}$ which satisfies two following conditions:
\begin{itemize}
 \item[(a)] For each $x\in V$, $\sum_{y\in N_G[x]}f(y)\geq 1$,
 \item[(b)] Each vertex $x$ for which $f(x)=-1$ is adjacent to at least one vertex $y$ for which $f(y)=2$. 
\end{itemize}
The value $f(V)=\sum_{x\in V}f(x)$ is called the {\bf weight} of the function $f$ and is denoted by $w(f)$.
The {\bf signed Roman domination number} of $G$, $\gamma_{_{sR}}(G)$, is the minimum weight of a SRDF on $G$.
\end{definition}

These concepts are introduced by  Ahangar et al. in \cite{Ahangar}.
They described the usefulness of these concepts in various applicative  areas
like ``defending the Roman empire"  (see \cite{Ahangar},  \cite{Hening} and \cite{Stewart} for more details).
It is obvious that for every graph $G$ of order $n$ we have $\gamma_{_{sR}}(G) \leq  n$, because assigning $+1$ to each vertex yields a SRDF.
In \cite{Ahangar} Ahangar et al.  present various
lower and upper bounds on the signed Roman domination number of a graph in terms of it's order, size and vertex degrees. Moreover, they characterized all graphs which attain these bounds. Also, they  investigate the relation between $\gamma_{_{sR}}$ and some other graphical parameters, and the signed Roman domination number of some special bipartite graphs. It is proved in \cite{Ahangar} that $\gamma_{_{sR}}(K_n)=1$ for each $n\neq 3$, $\gamma_{_{sR}}(K_3)=2$, $\gamma_{_{sR}}(C_n)=\lceil{2n\over3}\rceil$, $\gamma_{_{sR}}(P_n)=\lfloor{2n\over3}\rfloor$, and that the only $n$-vertex graph $G$ with $\gamma_{_{sR}}(G)=n$ is the empty graph $\overline{K}_n$.

%In \cite{domatic} Sheikholeslami and Volkmann studied these concepts from another point of view as follow. A set $\{f_1, f_2, . . . , f_d\}$ of distinct signed Roman
%dominating functions on $G$ with the property that $\sum_{i=1}^d f_i(v)\leq 1$ for each
%$v\in V (G)$, is called a {\it signed Roman dominating family} (of functions) on $G$.
%The maximum number of functions in a signed Roman dominating family on
%$G$ is the {\it signed Roman domatic number} of $G$, and is denoted by $d_{_{sR}}(G)$. They initiate the study of signed Roman domatic number in graphs and
% present some sharp bounds for $d_{_{sR}}(G)$. In addition, they determine the
%signed Roman domatic number of some graphs like stars, paths, cycles, complete graphs, cactuses, and some regular graphs.

Note that each signed Roman domination function $f$ of $G$ is uniquely determined by the ordered partition $(V_{-1},V_1,V_2)$ of $V(G)$, where $V_i=\{x\in V(G):~f(x)=i\}$ for each $i\in\{-1,1,2\}$. Specially, $w(f)=2|V_2|+|V_1|-|V_{-1}|$.
For convenience, we usually write $f=(V_{-1},V_1,V_2)$ and, when $S\subseteq V$ we denote the summation $\sum_{x\in S}f(x)$ by $f(S)$.
If $w(f)=\gamma_{_{sR}}(G)$, then $f$ is called a  {\bf $\gamma_{sR}(G)$-function} or an {\bf optimal SRDF} on $G$.
Recall that the join of two graphs $G_1$ and $G_2$,
denoted by $G_1\vee G_2$, is a graph with vertex set $V(G_1)\cup V(G_2)$ and edge set $E(G_1)\cup E(G_2)\cup\{uv:$ $u\in V(G_1),~v\in V(G_2)\}$. For example $K_1\vee P_n$ is the fan $F_n$, $K_1\vee C_n$ is the wheel $W_n$, and the friendship graph $Fr_n$, $n=2m+1$, is the graph obtained by joining $K_1$ to the $m$ disjoint copies of $K_2$.

In this paper we study the signed Roman domination number of the join of graphs.
Specially, we determine the signed Roman domination number of $C_m\vee C_n$,
$W_n$, $F_n$, and friendship  graphs $Fr_n$. 

%%%%%%%%%%%%%%%%%%%%%%%%%%%%%%%%%%%
%%%%%%%%%%%%%%%%%%%%%%%%%%%%%%%%%%%
%%%%%%%%%%%%%%%%%%%%%%%%%%%%%%%%%%%
%%%%%%%%%%%%%%%%%%%%%%%%%%%%%%%%%%%
%%%%%%%%%%%%%%%%%%%%%%%%%%%%%%%%%%%
\section{\bf Join of graphs}
\vskip 0.4 true cm

For investigating $\gamma_{_{sR}}$ of the join of graphs, 
the following  lemma is useful.

\begin{lemma} \label{MaxDeg}
If $G$ is a graph with $\Delta(G)=|V(G)|-1$, then $\gamma_{_{sR}}(G)\geq 1$.
\end{lemma}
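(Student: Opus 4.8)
The plan is to exploit the hypothesis $\Delta(G)=|V(G)|-1$ by locating a vertex whose closed neighborhood is all of $V(G)$, and then to apply condition (a) of the SRDF definition directly to that vertex. Write $n=|V(G)|$ and let $u$ be a vertex realizing the extremal degree, so that $\deg_G(u)=n-1$. Since $u$ is adjacent to $n-1$ other vertices and $G$ has exactly $n$ vertices, $u$ must be adjacent to every other vertex; hence $N_G[u]=N_G(u)\cup\{u\}=V(G)$. The whole argument hinges on this one observation: the hypothesis forces the existence of a universal vertex.

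Next I would take an arbitrary SRDF $f=(V_{-1},V_1,V_2)$ on $G$ and invoke condition (a) at the vertex $u$, which asserts $\sum_{y\in N_G[u]}f(y)\geq 1$. Because $N_G[u]=V(G)$, the left-hand side is precisely the sum of $f$ over all vertices, i.e. the weight $w(f)=\sum_{x\in V(G)}f(x)$. Thus condition (a) at $u$ collapses immediately into the single inequality $w(f)\geq 1$, valid for every SRDF $f$.

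Finally, since this bound holds for an arbitrary SRDF and in particular for a $\gamma_{sR}(G)$-function, taking the minimum over all SRDFs yields $\gamma_{_{sR}}(G)\geq 1$, as desired. There is essentially no technical obstacle here: once one recognizes that a vertex of degree $n-1$ has closed neighborhood equal to the entire vertex set, the desired lower bound is just a restatement of the closed-neighborhood condition at that single vertex. The only point worth stating carefully is that no knowledge of the signs assigned by $f$ is needed; the conclusion follows from condition (a) alone, independently of condition (b). (I also note in passing that the same reasoning covers the borderline reading in which the extremal degree is that of every vertex, since then $G$ is complete and any vertex serves as the universal vertex $u$.)
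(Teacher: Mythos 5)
Your proof is correct and follows essentially the same route as the paper: both identify a vertex of degree $|V(G)|-1$, note that its closed neighborhood is all of $V(G)$, and apply condition (a) of the SRDF definition at that vertex to conclude that every SRDF has weight at least one. The only cosmetic difference is that the paper applies this directly to an optimal SRDF, whereas you argue for an arbitrary SRDF and then take the minimum — the same argument in substance.
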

\begin{proof}{
Let $f$ be an optimal signed Roman domination function on $G$ and let 
$x\in V(G)$ be a vertex of maximum degree $\Delta(G)$. 
Since $N_G(x)=V(G)\setminus \{x\}$ and using the definition of a $SRDF$, we get
$$\gamma_{_{sR}}(G)=w(f)=\sum_{v\in V(G)}f(v)=f(x)+\sum_{v\in N_G(x)}f(v)=f(N_G[x])\geq 1.$$
}\end{proof}

\begin{corollary}     \label{GvK1}
For each graph $G$,  $\gamma_{_{sR}}(G\vee K_1) \geq1 $. Specially, if $\gamma_{_{sR}}(G)=0$, then $\gamma_{_{sR}}(G\vee K_1) =1$.
\end{corollary}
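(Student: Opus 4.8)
The plan is to deduce the inequality $\gamma_{_{sR}}(G\vee K_1)\geq 1$ directly from Lemma~\ref{MaxDeg}, and then to establish the equality in the special case by an explicit construction of a SRDF of weight $1$.

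First I would observe that the join $G\vee K_1$ always contains a universal vertex. Writing $u$ for the unique vertex of the $K_1$ factor, the definition of the join makes $u$ adjacent to every vertex of $G$, so the degree of $u$ in $G\vee K_1$ equals $|V(G)|=|V(G\vee K_1)|-1$. Hence $\Delta(G\vee K_1)=|V(G\vee K_1)|-1$, and Lemma~\ref{MaxDeg} applies immediately to give $\gamma_{_{sR}}(G\vee K_1)\geq 1$.

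For the second statement, since the lower bound $\geq 1$ is already in hand, I would only need to exhibit a SRDF of weight exactly $1$ on $G\vee K_1$. Assuming $\gamma_{_{sR}}(G)=0$, I would take an optimal SRDF $g$ on $G$, so that $w(g)=0$, and extend it to a function $f$ on $V(G\vee K_1)$ by keeping $f(v)=g(v)$ for $v\in V(G)$ and setting $f(u)=1$. This yields $w(f)=w(g)+f(u)=0+1=1$, so the remaining work is to check that $f$ genuinely satisfies conditions (a) and (b) of Definition~\ref{SRDF}.

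I do not expect a real obstacle here, since the verification is routine; the one point needing care is condition (a). For $u$ itself, the closed neighborhood $N_{G\vee K_1}[u]$ is all of $V(G\vee K_1)$, so the closed-neighborhood sum equals $w(f)=1$, which is exactly the borderline value $\geq 1$ that the choice $f(u)=1$ was designed to meet. For a vertex $x\in V(G)$, the closed neighborhood in the join is $N_G[x]\cup\{u\}$, so the sum is $g(N_G[x])+1\geq 2$ because $g$ already satisfies (a) on $G$. Condition (b) then causes no trouble: any vertex labeled $-1$ under $f$ lies in $V(G)$, where $g$ already guarantees a neighbor labeled $2$, and this adjacency persists in $G\vee K_1$ with the same labels. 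Combining this construction with the lower bound yields $\gamma_{_{sR}}(G\vee K_1)=1$.
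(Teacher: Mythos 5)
Your proof is correct and follows essentially the same route as the paper: the lower bound comes from Lemma~\ref{MaxDeg} via the universal vertex of $G\vee K_1$, and the upper bound comes from extending an optimal (weight-$0$) SRDF of $G$ by assigning $1$ to the $K_1$ vertex. In fact, your write-up is slightly more complete than the paper's, since you explicitly verify conditions (a) and (b) for the extended function, which the paper leaves as an unchecked assertion.
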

\begin{proof}{
The first statement follows directly from Lemma \ref{MaxDeg}. Let $f$ be a $\gamma_{sR}(G)$-function of $G$. 
Define $g:V(G\vee K_1) \rightarrow \{ -1 , 1 , 2 \}$ as $g(x)=f(x)$ when $x\in V(G)$, and $g(x)=1$ when $x\in V(K_1)$.
Since $g$ is a SRDF of weight $1$ on $G\vee K_1$, $\gamma_{_{sR}}(G\vee K_1)\leq1$.
Now Corollary \ref{GvK1} implies that $\gamma_{_{sR}}(G\vee K_1)=1$.
}\end{proof}

\begin{proposition} 
Let  $G$ and $H$ be two graphs such that $\gamma_{sR}(G) \geq 0$ and $\gamma_{sR}(H) \geq 0$. Then, 
$$\gamma_{_{sR}}(G\vee H)\leq \gamma_{_{sR}}(G)+\gamma_{_{sR}}(H).$$
\end{proposition}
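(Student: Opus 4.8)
The plan is to build an explicit SRDF on $G\vee H$ by gluing together optimal SRDFs on the two factors, and then to read off its weight. First I would fix a $\gamma_{_{sR}}(G)$-function $f$ and a $\gamma_{_{sR}}(H)$-function $g$, and define $h:V(G\vee H)\rightarrow\{-1,1,2\}$ by $h(x)=f(x)$ for $x\in V(G)$ and $h(x)=g(x)$ for $x\in V(H)$. By construction $w(h)=w(f)+w(g)=\gamma_{_{sR}}(G)+\gamma_{_{sR}}(H)$, so once $h$ is shown to be a SRDF the desired inequality $\gamma_{_{sR}}(G\vee H)\leq w(h)$ is immediate.

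Next I would verify condition (a). The structural fact driving everything is that in $G\vee H$ each vertex of $G$ is adjacent to every vertex of $H$, so for $x\in V(G)$ we have the disjoint union $N_{G\vee H}[x]=N_G[x]\cup V(H)$. Hence
$$\sum_{y\in N_{G\vee H}[x]}h(y)=f(N_G[x])+w(g)\geq 1+0=1,$$
where $f(N_G[x])\geq 1$ because $f$ is a SRDF on $G$, and $w(g)=\gamma_{_{sR}}(H)\geq 0$ by hypothesis. The symmetric computation handles $x\in V(H)$, this time invoking $\gamma_{_{sR}}(G)\geq 0$. This is precisely where the two nonnegativity hypotheses enter, and it is the only delicate point: the closed neighborhood of an $x\in V(G)$ now absorbs the \emph{entire} part $V(H)$, so the original bound $f(N_G[x])\geq 1$ survives only if the total contribution $w(g)$ of that part is nonnegative.

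Finally I would check condition (b). If $h(x)=-1$ with, say, $x\in V(G)$, then $f(x)=-1$, so condition (b) for $f$ furnishes a neighbor $y\in V(G)$ with $f(y)=2$; since $E(G)\subseteq E(G\vee H)$, that same $y$ witnesses condition (b) for $h$, and $h(y)=2$. The case $x\in V(H)$ is identical. Thus $h$ is a SRDF of weight $\gamma_{_{sR}}(G)+\gamma_{_{sR}}(H)$, which yields the claim. The argument as a whole is routine; the substantive step is the verification of (a), where the nonnegativity of $\gamma_{_{sR}}(G)$ and $\gamma_{_{sR}}(H)$ is exactly what makes the glued function admissible.
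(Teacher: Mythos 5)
Your proof is correct and takes essentially the same approach as the paper: glue a $\gamma_{_{sR}}(G)$-function and a $\gamma_{_{sR}}(H)$-function into one labeling of $G\vee H$, use the decomposition $N_{G\vee H}[x]=N_G[x]\cup V(H)$ together with the nonnegativity hypotheses to verify condition (a), and read off the weight. You are in fact slightly more careful than the paper, which leaves the check of condition (b) implicit and has a small typo in the neighborhood computation (writing $w(f_1)$ where the cross-term should be $w(f_2)$), a point your version gets right.
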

\begin{proof}{
Let $f_1$ be a $\gamma_{_{sR}}(G)$-function on $G$ and let $f_2$ be a $\gamma_{_{sR}}(H)$-function on $H$. 
Define $f:V(G\vee H) \rightarrow \{-1,1,2\}$ as
$f(x)=f_1(x)$ when $x\in V(G)$, and $f(x)=f_2(x)$ when $x\in V(H)$.
For each $v\in V(G)$,   $f(N_{G\vee H}[v]) =f(N_G[v])+ w(f_1) \geq1$. Similarly, for each $v\in V(H)$,   $f(N_{G\vee H}[v]) =f(N_H[v])+ w(f_2) \geq1$.
Thus, $f$ is a $SRDF$ on $G\vee H$ and
$\gamma_{_{sR}}(G\vee H) \leq w(f)=w(f_1)+w(f_2)=\gamma_{_{sR}}(G)+\gamma_{_{sR}}(H)$.
}\end{proof}
%%%%%%%%%%%%%%%%%%%%%%%%%%%%%%%%%%%
%%%%%%%%%%%%%%%%%%%%%%%%%%%%%%%%%%%
\section{\bf Join of cycles}
\vskip 0.4 true cm

Since $\Delta(C_m\vee C_n)=\max\{m+2,n+2\}$,  the maximum dergree of $C_m\vee C_n$ is $m+n-1$ if and only if $3\in\{m,n\}$.
Hence, for $m\geq4$ and $n\geq4$ the graph $C_m\vee C_n$ has no vertex of degree $|V(C_m\vee C_n)|-1$.
\begin{proposition}
If $n$ is a multiple of 3, Then $\gamma_{_{sR}}(C_{3} \vee C_{n})=1$.
\end{proposition}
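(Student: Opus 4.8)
The plan is to sandwich $\gamma_{_{sR}}(C_3\vee C_n)$ between matching bounds. For the lower bound, observe that every vertex of the $C_3$-part is universal in $C_3\vee C_n$: it has degree $2+n=|V(C_3\vee C_n)|-1$, which is exactly the maximum degree (as already recorded just before the statement, since $3\in\{3,n\}$ and $n\ge 3$). Hence Lemma~\ref{MaxDeg} applies directly and gives $\gamma_{_{sR}}(C_3\vee C_n)\ge 1$. The whole difficulty is therefore to exhibit a SRDF of weight exactly $1$.

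For the upper bound I would write $n=3k$, label the triangle $u_1,u_2,u_3$ and the cycle $v_0,v_1,\dots,v_{n-1}$ (indices mod $n$), and define $f$ by $f(u_1)=-1$, $f(u_2)=f(u_3)=1$, and, on the cycle, $f(v_i)=2$ when $i\equiv 0\pmod 3$ and $f(v_i)=-1$ otherwise. The reason for choosing the period-three block $(2,-1,-1)$ is that its weight is $0$, so the whole $C_n$-part contributes $k\cdot 0=0$ to the weight while the triangle contributes $-1+1+1=1$; thus $w(f)=1$, as required. This is where the hypothesis $3\mid n$ is essential: it is exactly what lets the block $(2,-1,-1)$ wrap around the cycle consistently, so that every window of three consecutive cycle-vertices is a cyclic shift of $(2,-1,-1)$ and sums to $0$.

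It then remains to check that $f$ is genuinely a SRDF. For condition (a): a universal vertex $u_j$ has $N[u_j]=V$, so its closed-neighborhood sum is $w(f)=1$; a cycle vertex $v_i$ sees its two cycle-neighbours and itself (summing to $0$) together with all of $u_1,u_2,u_3$ (summing to $1$), again giving $1\ge 1$. For condition (b) I would verify that each vertex carrying $-1$ is adjacent to a $2$: in each block $(2,-1,-1)$ the first $-1$ is preceded by a $2$ and the second is followed by the $2$ of the next block, and $3\mid n$ guarantees this persists cyclically; meanwhile $u_1=-1$ is universal, hence adjacent to any $v_i=2$ (such a $v_i$ exists because $n\ge 3$). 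I expect the only genuinely fiddly point to be condition (b) at the wrap-around of the cyclic pattern and at the lone $-1$ inside the triangle; once those are confirmed, $w(f)=1$ forces $\gamma_{_{sR}}(C_3\vee C_n)\le 1$, and combining with the lower bound yields the claimed equality.
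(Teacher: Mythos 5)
Your proof is correct and follows essentially the same route as the paper: the identical construction (triangle labeled $1,1,-1$ and the cycle labeled with the repeating block $(2,-1,-1)$, which requires $3\mid n$), combined with Lemma~\ref{MaxDeg} for the lower bound. The only difference is that you carry out explicitly the verification of conditions (a) and (b) that the paper dismisses as ``easy to check.''
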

\begin{proof}{
Let $V(C_3)=\{x_1,x_2,x_3\}$ and $V(C_n)=\{y_1,y_2,...,y_n\}$ which are arranged consecutively on a circle, respectively.
Define  $f:V(C_3\vee C_n)\rightarrow \{-1,1,2\}$  as $f(x_1)=f(x_2)=1$, $f(x_3)=-1$ and
$f(y_j)=2$ when  $i\equiv 1~(mod~3)$, and $f(y_j)=-1$ otherwise.   
Note that $f(V(C_3))=1$ and $f(V(C_n))=0$. It is easy to check that $f$ is a SRDF (of weight 1) on $C_3\vee C_n$. Now Lemma \ref{MaxDeg} completes the proof.
% If $x \in V(C_{3})$, Then $f(N\left[ x\right] )=1+0\geq1$. If $x \in V(C_{n})$, Then $f(N\left[ x\right] )=1+0\geq1$. Thus this is a $SRDF$ and Hence $\gamma_{sR}(C_{3} \vee C_{n})\leq1$.
%Now, since of all the vertex with $C_{3} \vee C_{n}$ outlines is adjecent to therefore $\gamma_{sR}(C_{3} \vee C_{n})\geq1$ and Hence $\gamma_{sR}(C_{3} \vee C_{n})=1$.
}\end{proof}

The following theorem considers more general cases.
\begin{thm} \label{CmCn4}
For each pair of positive integers $m\geq3$ and $n\geq3$, we have $1\leq \gamma_{_{sR}}(C_m\vee C_n)\leq 4$.
\end{thm}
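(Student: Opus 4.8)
The plan is to prove the two bounds separately: the lower bound $\gamma_{_{sR}}(C_m\vee C_n)\geq 1$ by an averaging argument over closed neighbourhoods, and the upper bound $\gamma_{_{sR}}(C_m\vee C_n)\leq 4$ by exhibiting an explicit SRDF of weight $4$. Throughout I would write $V(C_m)=\{x_1,\dots,x_m\}$ and $V(C_n)=\{y_1,\dots,y_n\}$ arranged consecutively, and for a function $f$ put $A=f(V(C_m))$ and $B=f(V(C_n))$, so that $w(f)=A+B$. The key structural fact is that in $C_m\vee C_n$ one has $N[x_i]=\{x_{i-1},x_i,x_{i+1}\}\cup V(C_n)$ and $N[y_j]=\{y_{j-1},y_j,y_{j+1}\}\cup V(C_m)$, so condition (a) splits into $f(x_{i-1})+f(x_i)+f(x_{i+1})+B\geq 1$ and $f(y_{j-1})+f(y_j)+f(y_{j+1})+A\geq 1$.

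For the lower bound, let $f$ be an optimal SRDF. If $3\in\{m,n\}$ then $\Delta(C_m\vee C_n)=|V(C_m\vee C_n)|-1$ and Lemma \ref{MaxDeg} already gives $w(f)\geq 1$. If $m,n\geq 4$, I would sum the split inequalities over all $i$ and over all $j$; since each vertex of $C_m$ is then counted three times, this yields $3A+mB\geq m$ and $nA+3B\geq n$. Taking the nonnegative combination with weights $n-3$ and $m-3$ makes the coefficients of $A$ and of $B$ both equal to $mn-9$, giving $(mn-9)(A+B)\geq 2mn-3m-3n$. Since $2mn-3m-3n-(mn-9)=(m-3)(n-3)\geq 0$ and $mn-9>0$, this forces $A+B\geq 1$.

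The upper bound is the main point, and the mechanism is that the join makes condition (b) essentially free: every vertex of one cycle is adjacent to every vertex of the other, so if each of $C_m$ and $C_n$ carries at least one vertex of value $2$, then every vertex labelled $-1$ automatically has a neighbour labelled $2$. Thus I only need, on each cycle $C_k$ (for $k=m$ and $k=n$), a function $g_k:V(C_k)\to\{-1,1,2\}$ with (i) $g_k(V(C_k))=2$, (ii) no three consecutive vertices labelled $-1$, and (iii) at least one vertex labelled $2$. Property (ii) is exactly the statement that every three consecutive labels sum to at least $-1$, since the only triple of values from $\{-1,1,2\}$ summing below $-1$ is $(-1,-1,-1)$. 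Granting such $g_m,g_n$, I define $f=g_m$ on $C_m$ and $f=g_n$ on $C_n$; then $A=B=2$, every triple sum is $\geq -1$, and the split form of (a) gives $f(N[x_i])\geq -1+B=1$ and $f(N[y_j])\geq -1+A=1$, while (b) holds by the remark above. Hence $w(f)=A+B=4$.

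It therefore remains to build $g_k$ for every $k\geq 3$, which I would do by cases on $k\bmod 3$ using the block $(2,-1,-1)$ of sum $0$: for $k=3t$ take $t$ copies of the block and change one $-1$ to $1$; for $k=3t+1$ take $t$ blocks followed by one extra vertex labelled $2$; for $k=3t+2$ take $t$ blocks followed by two extra vertices labelled $1,1$. In each case the sum is $2$, a label $2$ is present, and a short check of the seams shows no three consecutive $-1$'s arise. The main obstacle, and the only place needing genuine care, is this last construction: verifying (i)--(iii) simultaneously across all residues and the small cases (e.g. $k=3,4,5$), since the cyclic wrap-around and the interaction between the block seams and the ``extra'' vertices are exactly where an ill-chosen pattern could create a forbidden triple or miss the target sum. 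Once $g_k$ is in hand, the two bounds combine to give $1\leq \gamma_{_{sR}}(C_m\vee C_n)\leq 4$.
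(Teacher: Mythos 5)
Your proposal is correct and follows essentially the same route as the paper: the upper bound via weight-$2$ labelings of each cycle whose consecutive-triple sums are at least $-1$ (with the join supplying condition (b) and the missing $+2$), and the lower bound via averaging closed-neighborhood sums through the identity $\sum_{x\in V(C_k)} f(N_{C_k}[x]) = 3 f(V(C_k))$. The differences are only in execution: you build the per-cycle labelings from $(2,-1,-1)$ blocks by residue mod $3$ rather than the paper's parity-based patterns, and you package the lower bound as a symmetric linear combination of the two summed inequalities (plus Lemma \ref{MaxDeg} when $3\in\{m,n\}$) instead of the paper's case analysis assuming some cycle has nonpositive total weight.
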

\begin{proof}{
Assume that $V(C_m)=\{x_1,x_2,...,x_m\}$ and $V(C_n)=\{y_1,y_2,...,y_n\}$ which are arranged consecutively on a circle, respectively.
Without loss of generality, assume that $m$ is odd and $n$ is even (other cases are similar).
Define two functions $f_o:V(C_m)\rightarrow \{-1,1,2\}$ and  $f_e:V(C_n)\rightarrow \{-1,1,2\}$ as
\begin{eqnarray*}
f_o(x_i)=\left\{ \begin{array}{ll}     2 & i=1 \\ -1 & i\in\{ 2,4,...,n-1\} \\ 1 & i\in\{3,5,...,n\},   \end{array} \right.  
\hspace*{10mm}
f_e(y_i)=\left\{ \begin{array}{ll}     2 & j\in\{1,3\} \\ -1 & j\in\{2,4,...,n\} \\ 1 & j\in\{5,7,...,n-1\}.  \end{array} \right.
\end{eqnarray*}
Now define $f:V(C_m\vee C_n)\rightarrow \{-1,1,2\}$ as $f(v)=f_o(x_i)$ when  $v=x_i$, and  $f(v)=f_e(y_j)$ when $v=y_j$. 
Note that $f(x_1)=f(y_1)=2$ and each vertex  in $C_m\vee C_n$ is adjacent to  $x_1$ or $y_1$.
Also, $f(V(C_m))=f(V(C_n))=2$ and for each $i,j$ we have $f_o(N_{C_m}[x_i])\geq -1$ and $f_e(N_{C_n}[y_j])\geq -1$.
Hence, $$f(N_{C_m\vee C_n}[x_i])=f_o(N_{C_m}[x_i])+f_e(V(C_n))\geq -1+ 2=1$$
and $$f(N_{C_m\vee C_n}[y_j])=f_e(N_{C_n}[y_j])+f_o(V(C_m))\geq -1+ 2=1.$$
Thus, $f$ is a SRDF on $C_m\vee C_n$ and $w(f)=f_o(C_m)+f_e(C_n)=2+2=4$, the upper bound follows.
\\
In order to obtain the lower bound, let $g$ be an optimal SRDF on $C_m\vee C_n$. If $g(V(C_m))\geq 1$ and $g(V(C_m))\geq 1$, then the result follows.
Assume that $g(V(C_n))=\alpha \leq 0$. Since $g$ is a SRDF, for each $x \in V(C_m)$ we have $g(N_{C_m\vee C_n} [x]) \geq1$. 
This using the fact $g(N_{C_m\vee C_n} [x])=g(N_{C_m} [x])+g(V(C_n))$ implies that $g(N_{ C_m} [x]) \geq 1-\alpha$.
Hence,  
\begin{eqnarray*}
g(V(C_m)) =\sum_{x \in V(C_m)} g(x) = {1\over 3} \sum_{x \in V(C_m)} g(N_{ C_m} [x]) 
\geq {1\over 3} \sum_{x \in V(C_m)} (1-\alpha)  \geq {m\over 3} (1-\alpha).
\end{eqnarray*}
This implies that
$$\gamma_{_{sR}}(C_m\vee C_n)=w(g)=g(V(C_m))+g(V(C_n))\geq {m\over 3} (1-\alpha)+\alpha = {m\over 3} + ({m\over 3}-1)(-\alpha) \geq 1.$$
A similar argument holds for the situation $g(V(C_n)) \leq 0$. This completes the proof.
}\end{proof}

%%%%%%%%%%%%%%%%%%%%%%%%%%%%
\begin{lem} \label{atLeast1}
Let $m\geq 13$ and $n\geq 13$ be two positive integers.  If $f$ is a SRDF on $C_m\vee C_n$, 
then $f(V(C_m))> 0$ and $f(V(C_n))>0$. Specially,  $\gamma_{_{sR}}(C_m\vee C_n) \geq 2$.
\end{lem}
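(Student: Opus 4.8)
The plan is to run an averaging argument over one of the two cycles, played off against the global ceiling $w(f)\le 4$ supplied by Theorem \ref{CmCn4}. Since the payoff $\gamma_{_{sR}}(C_m\vee C_n)\ge 2$ is a statement about a minimum-weight function, I take $f$ to be an optimal SRDF (a $\gamma_{sR}(C_m\vee C_n)$-function) and set $\alpha=f(V(C_n))$ and $\beta=f(V(C_m))$, so that $w(f)=\alpha+\beta\le 4$. The only structural inputs I need are, first, that in the join every $x\in V(C_m)$ satisfies $N_{C_m\vee C_n}[x]=N_{C_m}[x]\cup V(C_n)$, so condition (a) reads $f(N_{C_m}[x])+\alpha\ge 1$; and second, that since $C_m$ is $2$-regular each vertex of $C_m$ lies in exactly three of the closed neighborhoods $N_{C_m}[x]$, which gives the double-counting identity $\sum_{x\in V(C_m)}f(N_{C_m}[x])=3\beta$ (and symmetrically $\sum_{y\in V(C_n)}f(N_{C_n}[y])=3\alpha$).

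First I would prove $\alpha>0$ by contradiction. Assuming $\alpha\le 0$, condition (a) forces $f(N_{C_m}[x])\ge 1-\alpha$ for every $x\in V(C_m)$; summing over the $m$ vertices and applying the identity above yields
$$3\beta=\sum_{x\in V(C_m)}f(N_{C_m}[x])\ge m(1-\alpha).$$
Combining this with $\beta\le 4-\alpha$ (from $w(f)\le 4$) gives $3(4-\alpha)\ge m(1-\alpha)$, which rearranges to $(m-3)\alpha\ge m-12$, that is $\alpha\ge\frac{m-12}{m-3}$. For $m\ge 13$ the right-hand side is strictly positive, contradicting $\alpha\le 0$; hence $\alpha>0$, and since $\alpha$ is an integer, $\alpha\ge 1$. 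Interchanging the roles of the two cycles and invoking $n\ge 13$ in the identical way gives $\beta\ge 1$. Therefore $f(V(C_m)),f(V(C_n))>0$ and $\gamma_{_{sR}}(C_m\vee C_n)=w(f)=\alpha+\beta\ge 2$, as asserted.

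The step I expect to be delicate is recognizing that the ceiling $w(f)\le 4$ — equivalently, the optimality of $f$ — is genuinely load-bearing rather than cosmetic. Condition (a) by itself yields only the lower bound $3\beta\ge m(1-\alpha)$, and this inequality is perfectly compatible with $\alpha<0$: for instance, assigning $2$ to every vertex of $C_m$ while giving $C_n$ any configuration of total weight $-5$ produces a valid SRDF with $f(V(C_n))<0$, since each $-1$ on $C_n$ is then adjacent to a $2$ and every closed-neighborhood sum stays $\ge 1$. It is precisely the upper estimate $\beta\le 4-\alpha$ that rules this out. The second point worth flagging is that the hypothesis $m\ge 13$ is sharp for this argument: the quantity $\frac{m-12}{m-3}$ is positive exactly when $m\ge 13$, whereas at $m=12$ the same computation delivers only $\alpha\ge 0$, which is too weak to force the strict positivity needed for the bound.
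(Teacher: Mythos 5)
Your argument is correct and is essentially the paper's own: assume one cycle-sum $\alpha\le 0$, use the join structure so that condition (a) forces $f(N_{C_m}[x])\ge 1-\alpha$ for every $x\in V(C_m)$, double-count via the fact that each vertex of a cycle lies in exactly three closed neighborhoods, and play the resulting lower bound off against the ceiling $4$ from Theorem \ref{CmCn4}; the paper phrases the final contradiction as $w(f)\ge \frac{13}{3}(1-\alpha)+\alpha>4$ rather than your rearrangement $(m-3)\alpha\ge m-12$, but these are the same computation. The one place you genuinely improve on the paper is in making the optimality of $f$ explicit, and this is not cosmetic: the lemma as stated (for an \emph{arbitrary} SRDF $f$) is false, exactly as your counterexample shows --- label all of $C_m$ with $2$ and give $C_n$ total weight $-5$; every closed-neighborhood sum is at least $1$ and every $-1$ sees a $2$, yet $f(V(C_n))<0$. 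The paper's proof silently uses optimality at the moment it declares $w(f)>4$ to be ``a contradiction with Theorem \ref{CmCn4}'': a non-optimal SRDF of weight exceeding $4$ contradicts nothing, since that theorem only bounds the minimum weight. Your restriction to optimal $f$ (equivalently, to SRDFs of weight at most $4$) is precisely what is needed both for the conclusion $\gamma_{_{sR}}(C_m\vee C_n)\ge 2$ and for every later invocation of this lemma in the paper, all of which apply it to optimal SRDFs. Your remark that $m\ge 13$ is sharp for this argument (at $m=12$ one only gets $\alpha\ge 0$) is also accurate.
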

\begin{proof}{
Suppose on the contrary that $f$ is a SRDF on $C_m\vee C_n$ and $f(C_n)=\alpha \leq 0$. Since $f$ is a SRDF, for each $x \in V(C_m)$ we have $f(N_{C_m\vee C_n} [x]) \geq1$. 
This using the fact $f(N_{C_m\vee C_n} [x])=f(N_{C_m} [x])+f(V(C_n))$ implies that $f(N_{ C_m} [x]) \geq |\alpha|+1$.
Hence,  
\begin{eqnarray*}
f(V(C_m)) = {1\over 3} \sum_{x \in V(C_m)} f(N_{ C_m} [x]) 
\geq {1\over 3} \sum_{x \in V(C_m)} (|\alpha|+1)  \geq {1\over 3} m(|\alpha|+1).
\end{eqnarray*}
Therefore,
\begin{eqnarray*}
w(f)=f(V(C_m)) + f(V(C_n)) \geq {m\over 3} (|\alpha|+1)+\alpha  \geq {13\over 3} (-\alpha+1)+\alpha ={-10\alpha\over 3}+{13\over 3} >4.
\end{eqnarray*}
This contradicts Theorem \ref{CmCn4}. Thus, $f(V(C_n))\geq 1$. Similarly, we can prove that  $f(V(C_m))\geq 1$.
}\end{proof}

The following corollary is an immidiate consequence of the proof of Lemma \ref{atLeast1}.
\begin{corollary}
Let $m\geq 13$ and $n\geq 13$ be two positive integers.  If $f$ is an optimal  SRDF on $C_m\vee C_n$ 
such that $f(N_{C_m}(x))<0$ for some $x\in V(C_m)$, then  $\gamma_{_{sR}}(C_m\vee C_n) \geq 3$.
\end{corollary}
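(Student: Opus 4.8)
The plan is to combine optimality with Lemma~\ref{atLeast1}, extract the forced local values from the hypothesis, and then split on the value of $x$. First, since $f$ is optimal and $m,n\geq 13$, Lemma~\ref{atLeast1} gives $f(V(C_m))\geq 1$ and $f(V(C_n))\geq 1$, so $\gamma_{_{sR}}(C_m\vee C_n)=w(f)\geq 2$ at once; the whole task is to upgrade this to $\geq 3$, i.e.\ to rule out the profile $f(V(C_m))=f(V(C_n))=1$. Next I would read off the hypothesis: the two $C_m$-neighbours of $x$ carry values in $\{-1,1,2\}$ with negative sum, and the only possibility is $-1$ and $-1$, so $f(N_{C_m}(x))=-2$ and both neighbours lie in $V_{-1}$. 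Finally, applying condition (a) at $x$ in the join, where $N_{C_m\vee C_n}[x]=\{x\}\cup N_{C_m}(x)\cup V(C_n)$, yields $f(x)-2+f(V(C_n))\geq 1$, that is $f(x)+f(V(C_n))\geq 3$.

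With this inequality I would dispose of two of the three cases. If $f(x)=-1$ then $f(V(C_n))\geq 4$, and if $f(x)=1$ then $f(V(C_n))\geq 2$; either way $w(f)=f(V(C_m))+f(V(C_n))\geq 1+2=3$, as required. All the difficulty is thus concentrated in the single subcase $f(x)=2$, where the inequality only returns $f(V(C_n))\geq 1$ and no surplus is gained. Here one is reduced to excluding $f(V(C_m))=f(V(C_n))=1$; setting $\alpha=f(V(C_n))=1$, condition (a) at each $y\in V(C_m)$ forces $f(N_{C_m}[y])\geq 1-\alpha=0$, and symmetrically on $C_n$, so both restrictions are cyclic $\{-1,1,2\}$-strings of total weight $1$ every one of whose closed $3$-windows is nonnegative. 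The crux is to contradict this.

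This crux is the step I expect to be the genuine obstacle, and I do not expect it to close, because the nonnegative-window condition together with total weight $1$ is far too weak to preclude an interior $-1,2,-1$. Concretely, already on $C_{14}$ the cyclic string consisting of four consecutive blocks $2,-1,-1$ followed by a final $2,-1$ has total weight $1$, has every closed $3$-window equal to $0$ except one equal to $3$, and already contains the pattern $-1,2,-1$ demanded by $f(x)=2$. Placing this string on both factors of $C_{14}\vee C_{14}$ produces a genuine SRDF of weight $2$: condition (a) holds because each window is $\geq 0$ and $f(V(C_n))=1$, while condition (b) is automatic since each factor contains a vertex of value $2$ adjacent to every vertex of the other. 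By Lemma~\ref{atLeast1} this function is optimal, yet it satisfies $f(N_{C_m}(x))=-2<0$ at the centre $x$ of the block $-1,2,-1$ while $w(f)=2$. Hence the open-neighbourhood hypothesis cannot by itself force $\gamma_{_{sR}}\geq 3$, and the argument only goes through once the deficit sits on the \emph{closed} neighbourhood: if instead $f(N_{C_m}[x])\leq -1$, then condition (a) at $x$ reads $f(N_{C_m}[x])+f(V(C_n))\geq 1$, giving $f(V(C_n))\geq 2$ and $w(f)\geq 3$ with no case analysis at all. I would therefore expect the statement to require $f(N_{C_m}[x])<0$, the open version being attainable only after importing extra global structure (for instance the parity of the window sums, or the joint interaction of the two cycles) that the present hypotheses do not supply.
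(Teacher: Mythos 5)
Your analysis is correct, and it exposes an error in the paper's statement rather than a gap in your argument. The paper gives no actual proof of this corollary: it merely declares it an immediate consequence of the proof of Lemma \ref{atLeast1}, and no such immediate argument exists for the statement as printed. As you observe, the hypothesis $f(N_{C_m}(x))<0$ forces both cycle-neighbours of $x$ to carry $-1$, and condition (a) at $x$ in the join yields $f(x)+f(V(C_n))\geq 3$, which settles the cases $f(x)\in\{-1,1\}$ (giving $f(V(C_n))\geq 2$ and hence $w(f)\geq 3$ via Lemma \ref{atLeast1}) but gives nothing beyond $f(V(C_n))\geq 1$ when $f(x)=2$. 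Your counterexample for that residual case is decisive, and it is in fact the paper's own construction: for $m=n=14$ the function of Theorem \ref{mn3k+2} (label $2$ at positions $\equiv 1\pmod{3}$ and $-1$ elsewhere, on both cycles) is a SRDF of weight $2$, hence optimal, on $C_{14}\vee C_{14}$, yet it satisfies $f(N_{C_{14}}(x_4))=f(x_3)+f(x_5)=-2<0$ while $\gamma_{_{sR}}(C_{14}\vee C_{14})=2$. So the corollary with open neighbourhoods is contradicted by the very theorem that follows it.

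Your closing diagnosis also identifies what the authors evidently intended. With the closed-neighbourhood hypothesis $f(N_{C_m}[x])<0$, the claim really is immediate: condition (a) at $x$ gives $f(V(C_n))\geq 1-f(N_{C_m}[x])\geq 2$, Lemma \ref{atLeast1} gives $f(V(C_m))\geq 1$, and therefore $\gamma_{_{sR}}(C_m\vee C_n)=w(f)\geq 3$, with no case analysis. This closed-neighbourhood argument is exactly the one the paper deploys later, via Lemma \ref{NegativeNeighbour}, in the proof of Theorem \ref{n3k3k+1} and the theorem following it, which strongly suggests that $N_{C_m}(x)$ in the corollary is a misprint for $N_{C_m}[x]$. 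In short: your case split, your counterexample, and your proposed repair are all correct; the statement is false as written and true once the deficit is placed on the closed neighbourhood.
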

%\begin{proof}{
%f is SRDF hence we should have $f(C_n)\geq 2$. since $f(C_m)>0$ the result follows.
%}\end{proof}

%%%%%%%%%%%%%%%%%%%%%%%%%%%%%%%%%%
\begin{thm}  \label{mn3k+2}
Let $m\geq13$ and $n\geq13$ be two positive integers. If $m\equiv2~(mod~3)$ and $n\equiv2~(mod~3)$, then $\gamma_{_{sR}}(C_m\vee C_n)=2$.  
\end{thm}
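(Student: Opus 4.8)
The plan is to establish the two matching bounds separately. The lower bound $\gamma_{_{sR}}(C_m\vee C_n)\geq 2$ is immediate: since $m\geq 13$ and $n\geq 13$, Lemma \ref{atLeast1} guarantees $f(V(C_m))>0$ and $f(V(C_n))>0$ for every SRDF $f$, hence $w(f)\geq 2$. Moreover, because $f(V(C_m))$ and $f(V(C_n))$ are integers, a SRDF of weight exactly $2$ must satisfy $f(V(C_m))=f(V(C_n))=1$. So the real content of the theorem is to construct one such function, and the natural target is a labeling assigning total weight $1$ to each cycle.

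For the upper bound I would build a labeling on a single cycle first. Write $m=3k+2$ with $k\geq 4$ (which holds since $m\geq 13$), and define $f_m:V(C_m)\to\{-1,1,2\}$ by $f_m(x_i)=2$ when $i\equiv 1\pmod 3$ and $f_m(x_i)=-1$ otherwise. The positions $\equiv 1\pmod 3$ are $1,4,\dots,3k+1$, so there are $k+1$ vertices of value $2$ and $2k+1$ of value $-1$, giving $f_m(V(C_m))=2(k+1)-(2k+1)=1$ as desired. The key local property to check is that every closed neighborhood in the cycle, i.e.\ every window of three consecutive vertices, has nonnegative $f_m$-sum. Away from the wrap-around, each such window meets exactly one position $\equiv 1\pmod 3$, so it has the pattern $\{2,-1,-1\}$ up to rotation and sums to $0$. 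The only delicate windows are the three straddling the defect created because $m$ is not a multiple of $3$; these involve the positions $3k,3k+1,3k+2,1,2$ with values $-1,2,-1,2,-1$, and one checks directly that the three windows sum to $0$, $3$, and $0$, all nonnegative. The identical construction applied to $C_n$ (using $n=3\ell+2$) yields $f_n$ with $f_n(V(C_n))=1$, every cycle-window nonnegative, and at least one vertex of value $2$.

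Finally I would glue the two labelings across the join. Define $f$ on $C_m\vee C_n$ by $f=f_m$ on $V(C_m)$ and $f=f_n$ on $V(C_n)$, so that $w(f)=f_m(V(C_m))+f_n(V(C_n))=2$. For a vertex $x\in V(C_m)$ the closed neighborhood in the join decomposes as $N_{C_m}[x]$ together with all of $V(C_n)$, whence $f(N_{C_m\vee C_n}[x])=f_m(N_{C_m}[x])+f_n(V(C_n))\geq 0+1=1$, and symmetrically for vertices of $C_n$. Condition (b) comes for free, since the join makes every vertex of one cycle adjacent to every vertex of the other and each cycle contains a vertex of value $2$: any vertex labeled $-1$ in $C_m$ sees a $2$ in $C_n$ and vice versa. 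Thus $f$ is a SRDF of weight $2$, giving $\gamma_{_{sR}}(C_m\vee C_n)\leq 2$, and together with the lower bound this yields the claimed equality.

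I expect the only genuine obstacle to be the verification at the single defect of the periodic pattern: because $m\equiv 2\pmod 3$, the residue-class construction cannot be perfectly $3$-periodic around the cycle, so one must confirm that pushing the two ``extra'' vertices together does not create a window of negative $f_m$-sum. Everything else --- the weight count, the decomposition of closed neighborhoods under the join, and the automatic satisfaction of condition (b) --- is routine bookkeeping.
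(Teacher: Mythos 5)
Your proposal is correct and is essentially the paper's own proof: the paper uses exactly the same labeling ($2$ on positions $\equiv 1\pmod 3$, $-1$ elsewhere, on both cycles), verifies as you do that every closed cycle-neighborhood sums to $0$ except the wrap-around one (which sums to $3$), glues across the join in the same way, and invokes Lemma \ref{atLeast1} for the lower bound. Your write-up is merely more explicit about the defect windows and condition (b), which the paper leaves as routine checks.
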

\begin{proof}{
Define the function $f$ from $V(C_m)\cup V(C_n)=\{x_1,...,x_m\}\cup\{y_1,...,y_n\}$ to $\{-1,1,2\}$ as follows.
\begin{eqnarray*}
f(x_i)=\left\{ \begin{array}{ll}   2 & i\equiv 1~(mod~3)  \\ -1 & o.w.  \end{array}\right.
\hspace*{10mm}
f(y_j)=\left\{ \begin{array}{ll}   2 & j\equiv 1~(mod~3)  \\ -1 & o.w.  \end{array}\right.
\end{eqnarray*}
Hence, $f(V(C_m))=f(V(C_n))=1$,  $f(N_{C_m}[x_m])=f(N_{C_n}[y_n])=3$ and for each $1\leq i<m$ and each $1\leq j<n$ we have $f(N_{C_m}[x_i])=f(N_{C_n}[y_j])=0$. Thus, $f$ is a SRDF of weight 2.
Now Lemma \ref{atLeast1} completes the proof.
}\end{proof}

%%%%%%%%%%%%%%%%%%%%%%%%%%%%%%%%%%%%
\begin{lemma}  \label{NegativeNeighbour}
Let $n\geq13$ be an integer such that  $n\not\equiv2\pmod{3}$. 
If $f:V(C_n)\rightarrow \{-1,1,2\}$ is a function for which $f(V(C_n))=1$, then there exists $y\in V(C_n)$ such that $f(N_{C_n}[y])<0$.
\end{lemma}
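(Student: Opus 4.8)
The plan is to argue by contradiction: assume that $f(N_{C_n}[y])\ge 0$ for every $y\in V(C_n)$ and derive a contradiction with the hypothesis $n\not\equiv 2\pmod 3$. The starting point is the averaging identity already exploited in Lemma~\ref{atLeast1}, namely
$$\sum_{i=1}^{n} f(N_{C_n}[y_i])=3\sum_{i=1}^{n} f(y_i)=3\,f(V(C_n))=3,$$
since each vertex lies in exactly three closed neighborhoods. As every window $f(N_{C_n}[y_i])$ is an integer that is assumed to be nonnegative, the positive ones are each at least $1$, so at most three windows are positive and the remaining (at least $n-3\ge 10$) windows equal $0$.

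Next I would pin down the local structure. The only way to write $0$ as a sum of three values from $\{-1,1,2\}$ is $(-1)+(-1)+2$, so a zero window consists of two $-1$'s and a single $2$ and, in particular, contains no vertex of value $1$. Consequently any window that does contain a vertex valued $1$ cannot be zero, and being nonnegative by assumption it must be positive. The essential use of the hypothesis enters here: writing $a=|V_{-1}|$, $b=|V_1|$, $c=|V_2|$, if $b=0$ then $f(V(C_n))=2c-a=1$ together with $a+c=n$ forces $3c=n+1$, i.e.\ $n\equiv 2\pmod 3$, contradicting the hypothesis. Hence there is at least one vertex $y_j$ with $f(y_j)=1$.

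Finally I would run the endgame. The three windows centered at $y_{j-1},y_j,y_{j+1}$ all contain $y_j$, hence are positive and pairwise distinct (as $n\ge 13$); since there are at most three positive windows in total, these are exactly the positive windows, all others are $0$, and no second vertex can carry the value $1$, for its three windows would have to coincide with the same three positive windows. Because the three positive windows are each at least $1$ and sum to $3$, each equals exactly $1$. Applying this to $N_{C_n}[y_{j-1}]=\{y_{j-2},y_{j-1},y_j\}$ and using $f(y_j)=1$ gives $f(y_{j-2})+f(y_{j-1})=0$, which forces one of $y_{j-2},y_{j-1}$ to be valued $1$, a second vertex of value $1$ contradicting uniqueness. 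This contradiction shows that some window must be negative. The main obstacle is the bookkeeping in this last step: one must verify that the three windows through a $1$-vertex are genuinely distinct and exhaust all positive windows, that the value $1$ occurs exactly once, and that the unique decomposition of $0$ (and hence the exclusion of the value $1$ from zero windows) is applied correctly. Once these points are secured, the hypothesis $n\not\equiv 2\pmod 3$ does the decisive work by guaranteeing the presence of a vertex of value $1$.
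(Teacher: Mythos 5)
Your proof is correct, and it shares the paper's overall skeleton: argue by contradiction from the assumption that every closed neighborhood sum is nonnegative, use the averaging identity $\sum_{x\in V(C_n)} f(N_{C_n}[x])=3f(V(C_n))=3$, and exploit the mod-$3$ obstruction for labelings that use only the values $-1$ and $2$. Where you genuinely diverge is in how the label $1$ is handled. The paper proves directly, by an exhaustive six-case analysis of the possible labels of the two neighbours of a hypothetical $1$-vertex, that no vertex can carry the label $1$, and only afterwards applies the counting argument ($1=2t-(n-t)$ forces $n\equiv 2\pmod 3$) to reach a contradiction. You run the same two ingredients in the opposite order: the counting argument, used contrapositively with the hypothesis $n\not\equiv 2\pmod 3$, produces a vertex $y_j$ with $f(y_j)=1$, and you then derive the contradiction from window arithmetic --- every zero window must decompose as $(-1)+(-1)+2$ and so contains no $1$-vertex, hence the three windows through $y_j$ are positive, are exactly the (at most three) positive windows, and each equals $1$; this makes $y_j$ the unique $1$-vertex, while $f(N_{C_n}[y_{j-1}])=1$ together with $f(y_j)=1$ forces $f(y_{j-2})+f(y_{j-1})=0$, which over $\{-1,1,2\}$ requires a second $1$-vertex. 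Your route buys a much shorter case analysis: the unique decomposition of $0$ plus the pigeonhole on positive windows replaces the paper's six local cases. The price is the bookkeeping you correctly flag --- that the three windows centred at $y_{j-1},y_j,y_{j+1}$ are distinct and that equal window sets force equal centres --- but both points are immediate for $n\geq 13$ (indeed for $n\geq 5$), so the argument is complete.
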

\begin{proof}{
Since $1=f(V(C_n))={1\over3}\sum_{x\in V(C_n)}f(N_{C_n}[x])$,
the summation $\sum_{x\in V(C_n)}f(N_{C_n}[x])$ is  equal to $3$.
Assume on the contrary that $f(N_{C_n}[y])\geq 0$ for each $y\in V(C_n)$. Thus, one of the following cases should be happened.
\begin{itemize}
\item[i)]  There exists $y\in V(C_n)$ such that $f(N_{C_n}[y])=3$ and $f(N_{C_n}[y'])=0$ for each $y'\neq y$.
\item[ii)]  There exist $y,y'\in V(C_n)$ such that $f(N_{C_n}[y])=2$, $f(N_{C_n}[y'])=1$ and $f(N_{C_n}[y''])=0$ for each $y''\notin\{y,y'\}$.
\item[iii)]  There exist $y,y',y''\in V(C_n)$ such that $f(N_{C_n}[y])=f(N_{C_n}[y'])=f(N_{C_n}[y''])=1$ and $f(N_{C_n}[\bar{y}])=0$ for each $\bar{y}\notin\{y,y',y''\}$.
\end{itemize}
{\bf Claim.} There exists no vertex with label $1$.

In order to prove this claim, suppose (on the contrary) that $f(y_j)=1$ for some $y_j\in V(C_n)=\{y_1,y_2,...,y_n\}$. 
We consider the following possibilities for the labels of the neighbours of $y_j$.
\begin{itemize}
\item[1)]  $f(y_{j-1})=1$ and $f(y_{j+1})=1$:\\
 This implies that $f(N_{C_n}[y_j])=3$ and $f(N_{C_n}[y_{j-1}])\geq 1$, which contradicts the above three possible cases (i), (ii) and (iii).
\item[2)]  $f(y_{j-1})=2$ and $f(y_{j+1})=2$:\\
 This implies that $f(N_{C_n}[y_j])=5$, which is a contradiction.
\item[3)]  $f(y_{j-1})=2$ and $f(y_{j+1})=1$:\\
 Hence $f(N_{C_n}[y_j])=4$, which is a contradiction.
\item[4)]  $f(y_{j-1})=2$ and $f(y_{j+1})=-1$:\\
 This implies that $f(N_{C_n}[y_j])=2$ and $f(N_{C_n}[y_{j-1}])\geq2$, which is a contradiction.
\item[5)]  $f(y_{j-1})=-1$ and $f(y_{j+1})=-1$:\\
 Thus $f(N_{C_n}[y_j])=-1$, which is a contradiction.
 \item[6)]  $f(y_{j-1})=1$ and $f(y_{j+1})=-1$:\\
Since $f(N_{C_n}[y_{j+1}])\geq 0$, $f(y_{j+2})\in\{1,2\}$. Since $f(N_{C_n}[y_{j}])=1$, $f(N_{C_n}[y_{j-1}])\geq 1$ and  $f(N_{C_n}[y_{j+1}])\geq 1$, we should have $f(N_{C_n}[y_{j+1}])=1$ and $f(y_{j+2})=1$. Therefore, $f(N_{C_n}[y_{j'}])=0$ for each $j'\notin\{j-1,j,j+1\}$ and specially $f(N_{C_n}[y_{j+2}])=0$, which is impossible.
\end{itemize}
This completes the proof of the claim. 
Therefore, the label of each vertex in $C_n$ is $-1$ or $2$. Let $t$ be the number of vertices whose label is $2$.
If $n=3k$, then
$1=f(V(C_n))=2t+(3k-t)(-1)=3(t-k)$,
which is a contradiction (3 is not a divisor of 1).
If $n=3k+1$, then
$1=2t+(3k+1-t)(-1)$ and hence, $2=3(t-k)$ which is a contradiction. 
}\end{proof}

\begin{thm}   \label{n3k3k+1}
Let $m\geq13$ and $n\geq13$ be two integers such that $m\equiv 2\pmod{3}$ and $n\not\equiv2\pmod{3}$. Then $\gamma_{_{sR}}(C_m \vee C_n)=3$.
\end{thm}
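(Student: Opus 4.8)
The plan is to prove the two inequalities $\gamma_{_{sR}}(C_m\vee C_n)\le 3$ and $\gamma_{_{sR}}(C_m\vee C_n)\ge 3$ separately. For the upper bound I would exhibit an explicit SRDF of weight $3$, splitting its weight as $f(V(C_m))=1$ and $f(V(C_n))=2$. On $C_m$, since $m\equiv 2\pmod 3$, I would reuse the period-three labeling of Theorem \ref{mn3k+2}, namely $f(x_i)=2$ when $i\equiv 1\pmod 3$ and $f(x_i)=-1$ otherwise; there $f(V(C_m))=1$ and every closed neighborhood satisfies $f(N_{C_m}[x_i])\ge 0$ (indeed it is $0$ except at one wrap-around vertex where it is $3$).

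On $C_n$ I would need a labeling of weight $2$ whose closed neighborhoods are all nonnegative, and here the two residues behave differently. When $n\equiv 1\pmod 3$ the same period-three pattern $f(y_j)=2$ for $j\equiv 1\pmod 3$ and $f(y_j)=-1$ otherwise already has weight $2$ and all closed neighborhoods $\ge 0$ (the wrap-around creates two adjacent $2$'s, which only helps). When $n\equiv 0\pmod 3$ this pattern has weight $0$, so I would modify it by raising a single $-1$, say $f(y_2)$, to $1$; this adds $2$ to the weight and keeps every closed neighborhood $\ge 0$. With these pieces, for $v=x_i$ one has $f(N_{C_m\vee C_n}[x_i])=f(N_{C_m}[x_i])+f(V(C_n))\ge 0+2\ge 1$, and for $v=y_j$ one has $f(N_{C_m\vee C_n}[y_j])=f(N_{C_n}[y_j])+f(V(C_m))\ge 0+1=1$; condition (b) holds automatically because each cycle contains a vertex labeled $2$ and every vertex of one cycle is adjacent to every vertex of the other. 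Hence $f$ is a SRDF of weight $1+2=3$.

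For the lower bound I would argue by contradiction. By Lemma \ref{atLeast1} we already know $\gamma_{_{sR}}(C_m\vee C_n)\ge 2$, so suppose $\gamma_{_{sR}}(C_m\vee C_n)=2$ and let $f$ be an optimal SRDF. Lemma \ref{atLeast1} gives $f(V(C_m))\ge 1$ and $f(V(C_n))\ge 1$; since these integers sum to $w(f)=2$, both equal $1$. Now the restriction of $f$ to $C_n$ is a function with $f(V(C_n))=1$, and since $n\ge 13$ with $n\not\equiv 2\pmod 3$, Lemma \ref{NegativeNeighbour} produces a vertex $y\in V(C_n)$ with $f(N_{C_n}[y])<0$. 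On the other hand, because $y$ is adjacent in the join to all of $V(C_m)$, the SRDF condition forces $1\le f(N_{C_m\vee C_n}[y])=f(N_{C_n}[y])+f(V(C_m))=f(N_{C_n}[y])+1$, i.e. $f(N_{C_n}[y])\ge 0$, a contradiction. Therefore $\gamma_{_{sR}}(C_m\vee C_n)\ge 3$, and combined with the construction this yields equality.

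The routine part is the lower bound, which falls out almost immediately from Lemmas \ref{atLeast1} and \ref{NegativeNeighbour} once one observes that a weight-$2$ function must split as $1+1$ across the two cycles. The step requiring the most care is the upper-bound construction on $C_n$: the naive periodic labeling has weight $2$ only in the residue class $n\equiv 1\pmod 3$, so the case $n\equiv 0\pmod 3$ needs the local modification, and in both cases one must check that no closed neighborhood sum drops below $0$ (so that adding $f(V(C_m))=1$ keeps every join neighborhood $\ge 1$). I do not expect either verification to be difficult, but the bookkeeping around the wrap-around vertices is where an error would most likely creep in.
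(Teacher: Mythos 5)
Your proposal is correct and follows essentially the same route as the paper: the upper bound splits the weight as $1$ on $C_m$ (periodic $2,-1,-1$ labeling) and $2$ on $C_n$ (the same periodic labeling when $n\equiv 1\pmod 3$, and a single $-1$ promoted to $1$ when $n\equiv 0\pmod 3$, which is the paper's $h_1$ up to rotation), while the lower bound combines Lemma \ref{atLeast1} with Lemma \ref{NegativeNeighbour} exactly as the paper does, merely phrased as a contradiction rather than a direct case split.
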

\begin{proof}{
Define the function $g$ on $V(C_m)=\{x_1,...,x_m\}$ as 
$g(x_i)=2$  when $i\equiv 1\pmod{3}$, and $g(x_i)=-1$ otherwise.
Thus,  $g(N_{C_m}[x_m])=3$, $g(N_{C_m}[x_i])=0$ for each $i\neq m$, and $g(V(C_m))=1$. 
When $n\equiv0\pmod{3}$ (or $n\equiv1\pmod{3}$) define the function $h_1$ (or $h_2$) on $V(C_n)=\{y_1,...,y_n\}$ as follows.
\begin{eqnarray*}
h_1(y_j)=\left\{ \begin{array}{ll}   1 & j=n \\ 2 & j\equiv1\pmod{3}  \\ -1 & o.w.  \end{array}\right.
\hspace*{2mm}
,
\hspace*{9mm}
h_2(y_j)=\left\{ \begin{array}{ll}   2 & j\equiv 1\pmod{3}  \\ -1 & o.w.  \end{array}\right.
\end{eqnarray*}
Note that $h_1(V(C_n))=2$ and $h_1(N_{C_n}[y_j])\geq 0$ for each $j$ 
(similarly, $h_2(V(C_n))=2$ and $h_2(N_{C_n}[y_j])\geq 0$ for each $j$). 
Now $g$ using  $h_1$ (or $h_2$) induces a labelling on $V(C_m\vee C_n)$ which is a SRDF of weight 1+2=3.
Hence,  $\gamma_{_{sR}}(C_m \vee C_n)\leq 3$.
Let $f$ be an optimal SRDF on $C_m \vee C_n$.
By Lemma \ref{atLeast1},   $f(V(C_m))\geq 1$ and $f(V(C_n))\geq 1$. If $f(V(C_n))\geq 2$, then we are done. 
Else $f(V(C_n))=1$ and Lemma \ref{NegativeNeighbour} implies that there exists $y\in V(C_n)$ such that $f(N_{C_n}[y])\leq -1$.
Since $f(N_{C_m\vee C_n}[y])\geq1$, we should have $f(V(C_m))\geq 2$. Thus, $w(f)=f(V(C_m))+f(V(C_n))\geq 3$, which completes the proof.
}\end{proof}

\begin{thm}
Let $m\geq13$ and $n\geq13$ be two integers such that $m\not\equiv 2\pmod{3}$ and $n\not\equiv2\pmod{3}$. Then $\gamma_{_{sR}}(C_m \vee C_n)=3$.
\end{thm}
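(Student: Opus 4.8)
The plan is to establish $\gamma_{_{sR}}(C_m\vee C_n)\le 3$ and $\gamma_{_{sR}}(C_m\vee C_n)\ge 3$ separately, reusing the machinery developed for Theorem \ref{n3k3k+1}. For the upper bound I would build an explicit SRDF $f$ by labelling $C_m$ with total weight $1$ and $C_n$ with total weight $2$, so that the join identities $f(N_{C_m\vee C_n}[x])=f(N_{C_m}[x])+f(V(C_n))$ and $f(N_{C_m\vee C_n}[y])=f(N_{C_n}[y])+f(V(C_m))$ reduce condition (a) to statements about closed neighbourhoods inside each cycle. On $C_n$ I would simply take the weight-$2$ functions $h_1$ (if $n\equiv 0$) or $h_2$ (if $n\equiv 1\pmod 3$) from the proof of Theorem \ref{n3k3k+1}: these have $h(V(C_n))=2$ and every closed neighbourhood $\ge 0$, so for $y\in V(C_n)$ we get $f(N_{C_m\vee C_n}[y])\ge 0+1=1$.

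The genuinely new ingredient is a weight-$1$ labelling of $C_m$ (with $m\not\equiv 2\pmod 3$) all of whose closed neighbourhoods are $\ge -1$; combined with $f(V(C_n))=2$ this gives $f(N_{C_m\vee C_n}[x])\ge -1+2=1$ for every $x\in V(C_m)$. I would obtain such a labelling by perturbing the periodic pattern $2,-1,-1$ locally. When $m\equiv 1\pmod 3$ the periodic pattern already has weight $2$ with every window nonnegative, so demoting one terminal $2$ to a $1$ drops the total to $1$ while keeping every window $\ge -1$; when $m\equiv 0\pmod 3$ the periodic pattern has weight $0$, and replacing one block $2,-1,-1$ by $1,1,-1$ raises the total to $1$ while keeping every window $\ge -1$. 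In both cases only the two or three windows straddling the modified block (and the cyclic wrap-around) need to be checked, and since $m\ge 13$ both $C_m$ and $C_n$ still carry a vertex labelled $2$; hence condition (b) holds automatically in the join, because every $-1$ vertex of one side is adjacent to all of the other side, which contains a $2$. This produces a SRDF of weight $1+2=3$.

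For the lower bound the argument is short and symmetric. Let $f$ be an optimal SRDF; Lemma \ref{atLeast1} gives $f(V(C_m))\ge 1$ and $f(V(C_n))\ge 1$, hence $w(f)\ge 2$. It remains to exclude $w(f)=2$, which forces $f(V(C_m))=f(V(C_n))=1$. Since $m\ge 13$ and $m\not\equiv 2\pmod 3$, Lemma \ref{NegativeNeighbour} applied to the restriction $f|_{V(C_m)}$ yields a vertex $x\in V(C_m)$ with $f(N_{C_m}[x])<0$, i.e. $f(N_{C_m}[x])\le -1$ as the values are integers. Then
\[
f(N_{C_m\vee C_n}[x])=f(N_{C_m}[x])+f(V(C_n))\le -1+1=0<1,
\]
contradicting condition (a). Therefore $w(f)\ge 3$, and together with the construction above $\gamma_{_{sR}}(C_m\vee C_n)=3$.

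I expect the only real obstacle to be the bookkeeping in the upper-bound construction: verifying that the locally perturbed pattern keeps every closed neighbourhood $\ge -1$ at the seams and at the cyclic wrap-around, and confirming a vertex labelled $2$ survives (which is where $m\ge 13$ is used). The lower bound, by contrast, is essentially immediate from Lemmas \ref{atLeast1} and \ref{NegativeNeighbour}; the point is that now \emph{both} residues are $\not\equiv 2\pmod 3$, so the negative-neighbourhood obstruction can be invoked on $C_m$ directly rather than only on $C_n$ as in Theorem \ref{n3k3k+1}.
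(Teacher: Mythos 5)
Your proposal is correct and takes essentially the same route as the paper: the lower bound is exactly the paper's argument (Lemma \ref{atLeast1} plus Lemma \ref{NegativeNeighbour} ruling out $f(V(C_m))=f(V(C_n))=1$), and your perturbed periodic patterns on $C_m$ coincide with the paper's functions $g_1$ (replacing a block $2,-1,-1$ by $1,1,-1$ when $m\equiv 0\pmod 3$) and $g_2$ (demoting one terminal $2$ to $1$ when $m\equiv 1\pmod 3$), paired with $h_1,h_2$ from Theorem \ref{n3k3k+1} just as in the paper.
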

\begin{proof}{
Let $f$ be an optimal SRDF on $C_m \vee C_n$. By Lemma \ref{atLeast1}, $f(V(C_m))\geq 1$ and $f(V(C_n))\geq 1$. 
Lemma \ref{NegativeNeighbour} implies that the case $f(V(C_m))=f(V(C_n))=1$ is impossible.
Thus  $\gamma_{_{sR}}(C_m \vee C_n)\geq 3$.
Using $h_1$ or $h_2$ from the proof of Theorem \ref{n3k3k+1} we obtain a labeling on $V(C_n)$ with total weight 2. 
For the case $m\equiv0\pmod{3}$ (or $m\equiv1\pmod{3}$) define the function $g_1$ (or $g_2$) on $V(C_m)$ as follows.
\begin{eqnarray*}
g_1(x_i)=\left\{ \begin{array}{ll}   1 & i\in\{m-2,m-1\} \\ 2 & i\neq m-2,~i\equiv 1~(mod~3)  \\ -1 & o.w.  \end{array}\right.
\hspace*{3mm}
,
\hspace*{9mm}
g_2(x_i)=\left\{ \begin{array}{ll}   1 & i=m \\ 2 & i\neq m,~i\equiv 1~(mod~3)  \\ -1 & o.w.  \end{array}\right.
\end{eqnarray*}
Note that $g_k(V(C_m))=1$ and  for each $1\leq i\leq m$ we have $g_k(N_{C_m}(x_i))\geq -1$, $k\in\{1,2\}$. 
Now regards to the possible cases for $m$ and $n$, and using one of two functions $g_1,g_2$ and one of two functions $h_1,h_2$ we obtain a labelling on $V(C_m)\cup V(C_n)$  which induces a SRDF of weight 3 on $C_m\vee C_n$.
}\end{proof}

%%%%%%%%%%%%%%%%%%%%%%%%%%%%%%%%%%
%%%%%%%%%%%%%%%%%%%%%%%%%%%%%%%%%%
%%%%%%%%%%%%%%%%%%%%%%%%%%%%%%%%%%
%%%%%%%%%%%%%%%%%%%%%%%%%%%%%%%%%%
%%%%%%%%%%%%%%%%%%%%%%%%%%%%%%%%%%

\section{\bf Wheels, Fans and Friendship graphs}
\vskip 0.4 true cm

The following theorem shows that  signed Roman domination number of a wheel  almost always is 1.

\begin{thm} \label{theorem:wheel}
Let $W_n=K_1\vee C_n$ be a wheel of order $n+1$. Then, $\gamma_{_{sR}}(W_4)=2$ and $\gamma_{_{sR}}(W_n)=1$ for each $n\neq 4$.
\end{thm}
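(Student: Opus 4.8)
The plan is to establish the theorem by proving matching upper and lower bounds, treating the small case $n=4$ separately. For the lower bound on a general wheel $W_n=K_1\vee C_n$, observe that the hub vertex has degree $n$, which equals $|V(W_n)|-1$, so $W_n$ has maximum degree $|V(W_n)|-1$. Lemma \ref{MaxDeg} then immediately gives $\gamma_{_{sR}}(W_n)\geq 1$ for every $n$. This handles the lower bound in all cases at once, so the real work is in constructing optimal SRDFs achieving the claimed values.

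For the upper bound when $n\neq 4$, I would exhibit an explicit SRDF of weight $1$. Let $c$ denote the hub and $y_1,\dots,y_n$ the rim vertices of $C_n$ arranged consecutively. The natural construction is to set $f(c)=1$ and distribute labels on the rim so that $f(V(C_n))=0$ while every closed neighborhood condition is satisfied; then $w(f)=f(c)+f(V(C_n))=1$. The cleanest pattern assigns $f(y_j)=2$ on a spread-out set of rim vertices (roughly every third vertex) and $f(y_j)=-1$ elsewhere, adjusting the residual labels depending on $n\bmod 3$ so that the rim sum is exactly $0$ and each rim vertex labeled $-1$ sits adjacent to a $2$. I would split into cases according to $n\pmod 3$ (and verify small values $n=3,5,6,\dots$ by hand), checking in each case that (a) for each rim vertex $y_j$, $f(N_{W_n}[y_j])=f(N_{C_n}[y_j])+f(c)\geq 1$, and (b) the hub's own closed neighborhood sum $f(c)+f(V(C_n))=1\geq 1$, together with condition (b) of Definition \ref{SRDF} for every $-1$-labeled vertex. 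Since the hub has label $1$ not $2$, I must ensure each $-1$ rim vertex has a rim neighbor labeled $2$, which the every-third-vertex pattern guarantees.

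For the exceptional case $n=4$, I would first show $\gamma_{_{sR}}(W_4)\geq 2$ by a short case analysis: $W_4$ has five vertices, and one checks that no SRDF of weight $1$ exists. Here the obstruction is that $C_4$ is too short for the spreading pattern to yield rim sum $0$ while respecting both the closed-neighborhood inequalities and the ``$-1$ adjacent to $2$'' condition; any attempt forces the weight up to $2$. Then a direct construction (for instance labeling the hub and appropriate rim vertices) gives a SRDF of weight $2$, establishing $\gamma_{_{sR}}(W_4)=2$.

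The main obstacle I anticipate is the lower bound $\gamma_{_{sR}}(W_4)\geq 2$, since Lemma \ref{MaxDeg} only yields the bound $1$ and I must rule out weight-$1$ functions by an exhaustive but careful argument over the possible label patterns on the four rim vertices. The general upper-bound construction is routine once the $n\pmod 3$ cases are organized, but care is needed to confirm condition (b) of the SRDF definition holds for the hub when $f(c)=1$: the hub itself is labeled $1$, so no issue arises there, but every rim vertex labeled $-1$ must be verified to have a neighbor (necessarily on the rim, since the hub is not a $2$) labeled $2$.
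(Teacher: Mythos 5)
Your overall strategy (Lemma \ref{MaxDeg} for the lower bound, explicit weight-$1$ constructions for $n\neq 4$, inspection plus an explicit weight-$2$ function for $W_4$) is the same as the paper's, but your specific construction scheme for the upper bound has a genuine gap: the combination ``hub labeled $1$, rim sum $0$'' that you propose as the ``cleanest pattern'' is \emph{impossible} whenever $n\not\equiv 0\pmod 3$. To see this, let $a$, $b$, $c$ denote the numbers of rim vertices labeled $2$, $1$, $-1$ respectively. Rim sum zero forces $c=2a+b$. Since the hub carries label $1$, condition (b) of Definition \ref{SRDF} forces every $-1$ rim vertex to have a \emph{rim} neighbor labeled $2$ (as you yourself noted); but each $2$ on the cycle has only two rim neighbors, so $c\leq 2a$. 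Combining the two gives $b=0$, $c=2a$, hence $n=a+c=3a$. So your pattern exists only when $3\mid n$ (and there it is exactly the repeating $2,-1,-1$ pattern), and no ``adjusting of residual labels depending on $n\bmod 3$'' can rescue it in the other residue classes. Concretely, for $W_5$ the only rim distribution with sum $0$ is one $2$, one $1$, three $-1$'s, and then condition (b) already fails; your plan to ``verify small values by hand'' would have hit this wall immediately. You flagged the relevant constraint in your last paragraph but did not notice that it is unsatisfiable.

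The repair is what the paper does: when $n\not\equiv 0\pmod 3$, label the hub $2$ and aim for rim sum $-1$, so that the total weight is still $1$. With hub label $2$, condition (b) is automatic for every rim vertex, and the closed-neighborhood inequality at a rim vertex only requires its closed rim-neighborhood sum to be at least $-1$, which is much easier to arrange. For example, for all odd $n$ one can alternate $-1,1,-1,1,\dots$ around the rim (rim sum $-1$); for even $n$ with $n\equiv 1$ or $2\pmod 3$, the paper uses every-third-vertex patterns of $2$'s perturbed near the end of the cycle (its Cases 3 and 4). The paper reserves the hub-$1$, rim-sum-$0$ pattern precisely for (even) $n\equiv 0\pmod 3$, which by the counting above is the only place it can work. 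Your lower-bound arguments, both the general one via Lemma \ref{MaxDeg} and the exhaustive check that $\gamma_{_{sR}}(W_4)\geq 2$, are fine and coincide with the paper's.
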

\begin{proof}{
Let $V(W_n)=\{v_0,v_1,v_2,...,v_n\}$ and $E(W_n)=\{v_0v_i:~1\leq i\leq n \}\cup \{v_1v_2,v_2v_3,...,v_{n-1}v_n,v_nv_1\}$. Since $\Delta(W_n)=|V(W_n)|-1$, Lemma \ref{MaxDeg} implies that $\gamma_{_{sR}}(W_n)\geq 1$.
For the case $n=4$ it is not hard to check by inspection that there exists no signed Roman domination function on $W_4$ of weight 1 while, Figure \ref{fig:w4w5} (a) illustrates an $SRDF$ on $W_4$ of weight 2. Hence $\gamma_{_{sR}}(W_4)=2$.
To complete the proof it is sufficient to provide a signed Roman domination function of weight 1 on $W_n$ for each $n\neq 4$. For this reason we consider the following different cases.
\\ %\vspace*{5mm}
{\bf Case 1.} $n$ is odd:

Define the function $f:V(W_n)\rightarrow \{-1,1,2\}$ as below.
Figure \ref{fig:w4w5} (b) illustrate it for the case  $n=5$ where, the central vertex is $v_0$, top one is $v_1$ and $v_2$ is the second vertex when the sense of traversal being clockwise.
\begin{eqnarray} \label{case1}
f(v_i)= \left\{    \begin{array}{ll}
2 & i=0 \\ -1 & i\equiv 1 ~(\mbox{mod} ~2) \\ 1 & o.w.
\end{array} \right.
\end{eqnarray}
%Note that each vertex in $V_{-1}=f^{^{-1}}(-1)$ is adjacent to $v_0\in f^{^{-1}}(2)$. Also, it is easy to check that
%\begin{eqnarray*}
%f(N[v_i])= \left\{    \begin{array}{ll}
%1 & i\in\{0,1,n\} \\ 1 & i\notin\{0,1,n\},~i\equiv 0 ~(\mbox{mod} ~2) \\ 3 & i\notin\{0,1,n\},~i\equiv 1 ~(\mbox{mod} ~2).
%\end{array} \right.
%\end{eqnarray*}
Note that $f$ is a $SRDF$ on $W_n$ of weight $w(f)=f(N_{_{W_n}}[v_0])=1$.
%%%%%%%%%%%%%%%%%%%%%%%%%%%%%%%%%%
\\ {\bf Case 2.} $n$ is even and $n\equiv 0~(\mbox{mod}~3)$:

Define the function $f:V(W_n)\rightarrow \{-1,1,2\}$ as below. Figure \ref{fig:w12} (a) depicts it for the case $n=12$.
\begin{eqnarray} \label{case2}
f(v_i)= \left\{    \begin{array}{ll}
1 & i=0 \\ 2 & i\geq1,~i\equiv 0 ~(\mbox{mod} ~3) \\ -1 & o.w.
\end{array} \right.
\end{eqnarray}
It is straightforward  to check that
 $f$ is a $SRDF$ on $W_n$ of weight $1$.
%%%%%%%%%%%%%%%%%%%%%%%%%%%%%%%%%%
\\ {\bf Case 3.} $n$ is even and $n\equiv 1~(\mbox{mod}~3)$.

Define the function $f$ on $V(W_n)$ as follows. Figure \ref{fig:w12} (b) illustrates it for the case $n=10$.
\begin{eqnarray} \label{case3}
f(v_i)= \left\{    \begin{array}{ll}
2& i=0 \\ 2 & 1\leq i\leq n-7,~i\equiv 0 ~(\mbox{mod} ~3) \\  1 & i\in\{n-4,n-1,n\} \\ -1 & o.w.
\end{array} \right.
\end{eqnarray}
It is not hard to check that
 $f$ is a $SRDF$ on $W_n$ and $w(f)=1$.
%%%%%%%%%%%%%%%%%%%%%%%%%%%%%%%%%%
\\ {\bf Case 4.} $n$ is even and $n\equiv 2~(\mbox{mod}~3)$.

Define the function $f$ on $V(W_n)$ as follow. Figure \ref{fig:w12} (b) depicts it for the case $n=8$.
\begin{eqnarray} \label{case4}
f(v_i)= \left\{    \begin{array}{ll}
2 & i=0 \\ 2 & 1\leq i\leq n-5,~i\equiv 0 ~(\mbox{mod} ~3) \\  1 & i\in\{n-2,n\} \\ -1 & o.w.
\end{array} \right.
\end{eqnarray}
It is easy to check that
 $f$ is a $SRDF$ on $W_n$ and it's weight is one.\\
Therefore, in each case we provide a SRDF on $W_n$ of weight one. This completes the proof.
}\end{proof}
%\newpage
%%%%%%%%%%%%%%%%%%%%%%%%%%%%%%%%%%
%%%%%%%%%%%%%%%%%%%%%%%%%%%%%%%%%%
%%%%%%%%%%%%%%%%%%%%%%%%%%%%%%%%%%
%%%%%%%%%%%%%%%%%%%%%%%%%%%%%%%%%%
%%%%%%%%%%%%%%%%%%%%%%%%%%%%%%%%%%
\begin{center}
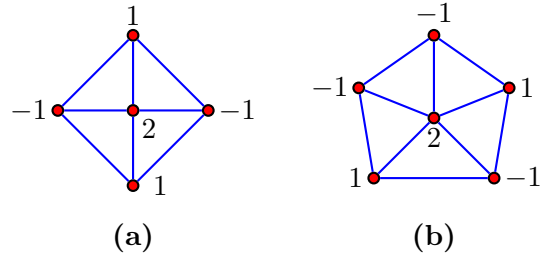

\begin{tikzpicture}
[inner sep=0.5mm, place/.style={circle,draw=black,fill=red,thick}]
%%%%%%%
\node[place] (v_1) at (-2,1) [label=above:$1$] {};
\node[place] (v_2) at (-1,0) [label=right:$-1$] {}edge [-,thick,blue](v_1);
\node[place] (v_3) at (-2,-1) [label=right:$~1$] {}edge [-,thick,blue](v_2);
\node[place] (v_4) at (-3,0) [label=left:$-1$] {}edge [-,thick,blue](v_3) edge [-,thick,blue](v_1);
\node[place] (v_0) at (-2,0) [label=below right:$2$] {}edge [-,thick,blue](v_1) edge [-,thick,blue](v_2) edge [-,thick,blue](v_3) edge [-,thick,blue](v_4);
\node (dots) at (-2,-1.7) [label=center:{\bf (a)}]{};
%%%%%%%
\node[place] (v_1) at (2,1) [label=above:$-1$] {};
\node[place] (v_2) at (3,.3) [label=right:$1$] {}edge [-,thick,blue](v_1);
\node[place] (v_3) at (2.8,-.9) [label=right:$-1$] {}edge [-,thick,blue](v_2);
\node[place] (v_4) at (1.2,-.9) [label=left:$1$] {}edge [-,thick,blue](v_3);
\node[place] (v_5) at (1,.3) [label=left:$-1$] {}edge [-,thick,blue](v_4) edge [-,thick,blue](v_1);
\node[place] (v_0) at (2,-.1) [label=below:$2$] {}edge [-,thick,blue](v_1) edge [-,thick,blue](v_2) edge [-,thick,blue](v_3) edge [-,thick,blue](v_4)  edge [-,thick,blue](v_5);
\node (dots) at (2,-1.7) [label=center:{\bf (b)}]{};
%%%%%%%
\end{tikzpicture}
\captionof{figure}{Signed Roman domination labeling on  $W_4$ and $W_5$.\label{fig:w4w5}}
\end{center}
%%%%%%%%%%%%%%%%%%%%%%%%%%%%%%%%%%
%%%%%%%%%%%%%%%%%%%%%%%%%%%%%%%%%%
%%%%%%%%%%%%%%%%%%%%%%%%%%%%%%%%%%
\begin{center}
\begin{tikzpicture}
[inner sep=0.5mm, place/.style={circle,draw=black,fill=red,thick}]
%%%%%%%%%%%%%%%%%%%%  w12
\node[place] (v_0) at (-4.3,0) {};      % [label=left:$v_0$] {};
\draw[blue,thick] (v_0) -- +(0:1.5cm);
\draw[blue,thick] (v_0) -- +(30:1.5cm);
\draw[blue,thick] (v_0) -- +(60:1.5cm);
\draw[blue,thick] (v_0) -- +(90:1.5cm);
\draw[blue,thick] (v_0) -- +(120:1.5cm);
\draw[blue,thick] (v_0) -- +(150:1.5cm);
\draw[blue,thick] (v_0) -- +(180:1.5cm);
\draw[blue,thick] (v_0) -- +(210:1.5cm);
\draw[blue,thick] (v_0) -- +(240:1.5cm);
\draw[blue,thick] (v_0) -- +(270:1.5cm);
\draw[blue,thick] (v_0) -- +(300:1.5cm);
\draw[blue,thick] (v_0) -- +(330:1.5cm);
\filldraw[red,draw=black] (v_0)  ++(0:1.5cm) circle (2.2pt);
\filldraw[red,draw=black] (v_0)  ++(30:1.5cm) circle (2.2pt);
\filldraw[red,draw=black] (v_0)  ++(60:1.5cm) circle (2.2pt);
\filldraw[red,draw=black] (v_0)  ++(90:1.5cm) circle (2.2pt);
\filldraw[red,draw=black] (v_0)  ++(120:1.5cm) circle (2.2pt);
\filldraw[red,draw=black] (v_0)  ++(150:1.5cm) circle (2.2pt);
\filldraw[red,draw=black] (v_0)  ++(180:1.5cm) circle (2.2pt);
\filldraw[red,draw=black] (v_0)  ++(210:1.5cm) circle (2.2pt);
\filldraw[red,draw=black] (v_0)  ++(240:1.5cm) circle (2.2pt);
\filldraw[red,draw=black] (v_0)  ++(270:1.5cm) circle (2.2pt);
\filldraw[red,draw=black] (v_0)  ++(300:1.5cm) circle (2.2pt);
\filldraw[red,draw=black] (v_0)  ++(330:1.5cm) circle (2.2pt);
\filldraw[red,draw=black] (v_0)  ++(360:1.5cm) circle (2.2pt);
\draw[blue,thick] (v_0) circle (1.5cm);
\node (dots) at (-4.3,1.8) [label=center:$-1$]{};
\node (dots) at (-3.4,1.6) [label=center:$-1$]{};
\node (dots) at (-2.8,1) [label=center:$2$]{};
\node (dots) at (-2.4,0) [label=center:$-1$]{};
\node (dots) at (-2.7,-.9) [label=center:$-1$]{};
\node (dots) at (-3.3,-1.5) [label=center:$2$]{};
\node (dots) at (-4.3,-1.8) [label=center:$-1$]{};
\node (dots) at (-5.2,-1.6) [label=center:$-1$]{};
\node (dots) at (-5.9,-.85) [label=center:$2$]{};
\node (dots) at (-6.2,0) [label=center:$-1$]{};
\node (dots) at (-6,0.85) [label=center:$-1$]{};
\node (dots) at (-5.2,1.6) [label=center:$2$]{};
\node (dots) at (-3.6,-0.2) [label=center:$1$]{};
\node (dots) at (-4.3,-2.7) [label=center:{\bf (a)}]{};
%%
%%%%%%%%%%%%%%%
%%%%%%%%%%%%%%%%%%%%  W10
\node[place] (v_0) at (0,0) {};      % [label=left:$v_0$] {};
\draw[blue,thick] (v_0) -- +(0:1.5cm);
\draw[blue,thick] (v_0) -- +(30:1.5cm);
\draw[blue,thick] (v_0) -- +(60:1.5cm);
\draw[blue,thick] (v_0) -- +(90:1.5cm);
\draw[blue,thick] (v_0) -- +(150:1.5cm);
%\draw[blue,thick] (v_0) -- +(150:1.5cm);
%\draw[blue,thick] (v_0) -- +(180:1.5cm);
\draw[blue,thick] (v_0) -- +(210:1.5cm);
\draw[blue,thick] (v_0) -- +(240:1.5cm);
\draw[blue,thick] (v_0) -- +(270:1.5cm);
\draw[blue,thick] (v_0) -- +(300:1.5cm);
\draw[blue,thick] (v_0) -- +(330:1.5cm);
\filldraw[red,draw=black] (v_0)  ++(0:1.5cm) circle (2.2pt);
\filldraw[red,draw=black] (v_0)  ++(30:1.5cm) circle (2.2pt);
\filldraw[red,draw=black] (v_0)  ++(60:1.5cm) circle (2.2pt);
\filldraw[red,draw=black] (v_0)  ++(90:1.5cm) circle (2.2pt);
\filldraw[red,draw=black] (v_0)  ++(150:1.5cm) circle (2.2pt);
%\filldraw[red,draw=black] (v_0)  ++(150:1.5cm) circle (2.2pt);
%\filldraw[red,draw=black] (v_0)  ++(180:1.5cm) circle (2.2pt);
\filldraw[red,draw=black] (v_0)  ++(210:1.5cm) circle (2.2pt);
\filldraw[red,draw=black] (v_0)  ++(240:1.5cm) circle (2.2pt);
\filldraw[red,draw=black] (v_0)  ++(270:1.5cm) circle (2.2pt);
\filldraw[red,draw=black] (v_0)  ++(300:1.5cm) circle (2.2pt);
\filldraw[red,draw=black] (v_0)  ++(330:1.5cm) circle (2.2pt);
\filldraw[red,draw=black] (v_0)  ++(360:1.5cm) circle (2.2pt);
\draw[blue,thick] (v_0) circle (1.5cm);
\node (dots) at (0,1.8) [label=center:$-1$]{};
\node (dots) at (.9,1.6) [label=center:$-1$]{};
\node (dots) at (1.5,1) [label=center:$2$]{};
\node (dots) at (1.9,0) [label=center:$-1$]{};
\node (dots) at (1.6,-.9) [label=center:$-1$]{};
\node (dots) at (.9,-1.6) [label=center:$1$]{};
\node (dots) at (-.3,-1.8) [label=center:$-1$]{};
\node (dots) at (-1.1,-1.5) [label=center:$-1$]{};
\node (dots) at (-1.6,-.85) [label=center:$1$]{};
%\node (dots) at (-2.2,0) [label=center:$-1$]{};
%\node (dots) at (-2,0.85) [label=center:$-1$]{};
\node (dots) at (-1.5,1) [label=center:$1$]{};
\node (dots) at (-.6,0) [label=center:$2$]{};
\node (dots) at (0,-2.7) [label=center:{\bf (b)}]{};
%%%%%%%
%%
%%%%%%%%%%%%%%%
%%%%%%%%%%%%%%%%%%%%  W8
\node[place] (v_0) at (4,0) {};      % [label=left:$v_0$] {};
\draw[blue,thick] (v_0) -- +(0:1.5cm);
\draw[blue,thick] (v_0) -- +(30:1.5cm);
\draw[blue,thick] (v_0) -- +(60:1.5cm);
\draw[blue,thick] (v_0) -- +(90:1.5cm);
%\draw[blue,thick] (v_0) -- +(150:1.5cm);
%\draw[blue,thick] (v_0) -- +(150:1.5cm);
%\draw[blue,thick] (v_0) -- +(180:1.5cm);
\draw[blue,thick] (v_0) -- +(210:1.5cm);
\draw[blue,thick] (v_0) -- +(240:1.5cm);
%\draw[blue,thick] (v_0) -- +(270:1.5cm);
\draw[blue,thick] (v_0) -- +(300:1.5cm);
\draw[blue,thick] (v_0) -- +(330:1.5cm);
\filldraw[red,draw=black] (v_0)  ++(0:1.5cm) circle (2.2pt);
\filldraw[red,draw=black] (v_0)  ++(30:1.5cm) circle (2.2pt);
\filldraw[red,draw=black] (v_0)  ++(60:1.5cm) circle (2.2pt);
\filldraw[red,draw=black] (v_0)  ++(90:1.5cm) circle (2.2pt);
%\filldraw[red,draw=black] (v_0)  ++(150:1.5cm) circle (2.2pt);
%\filldraw[red,draw=black] (v_0)  ++(150:1.5cm) circle (2.2pt);
%\filldraw[red,draw=black] (v_0)  ++(180:1.5cm) circle (2.2pt);
\filldraw[red,draw=black] (v_0)  ++(210:1.5cm) circle (2.2pt);
\filldraw[red,draw=black] (v_0)  ++(240:1.5cm) circle (2.2pt);
%\filldraw[red,draw=black] (v_0)  ++(270:1.5cm) circle (2.2pt);
\filldraw[red,draw=black] (v_0)  ++(300:1.5cm) circle (2.2pt);
\filldraw[red,draw=black] (v_0)  ++(330:1.5cm) circle (2.2pt);
\filldraw[red,draw=black] (v_0)  ++(360:1.5cm) circle (2.2pt);
\draw[blue,thick] (v_0) circle (1.5cm);
\node (dots) at (4,1.8) [label=center:$-1$]{};
\node (dots) at (4.9,1.6) [label=center:$-1$]{};
\node (dots) at (5.5,1) [label=center:$2$]{};
\node (dots) at (5.9,0) [label=center:$-1$]{};
\node (dots) at (5.6,-.9) [label=center:$-1$]{};
\node (dots) at (4.9,-1.6) [label=center:$1$]{};
%\node (dots) at (3.7,-1.8) [label=center:$-1$]{};
\node (dots) at (2.9,-1.5) [label=center:$-1$]{};
\node (dots) at (2.4,-.85) [label=center:$1$]{};
%\node (dots) at (-2.2,0) [label=center:$-1$]{};
%\node (dots) at (-2,0.85) [label=center:$-1$]{};
%\node (dots) at (-1.5,1) [label=center:$1$]{};
\node (dots) at (3.6,0.2) [label=center:$2$]{};
\node (dots) at (4,-2.7) [label=center:{\bf (c)}]{};
%%%%%%%
\end{tikzpicture}

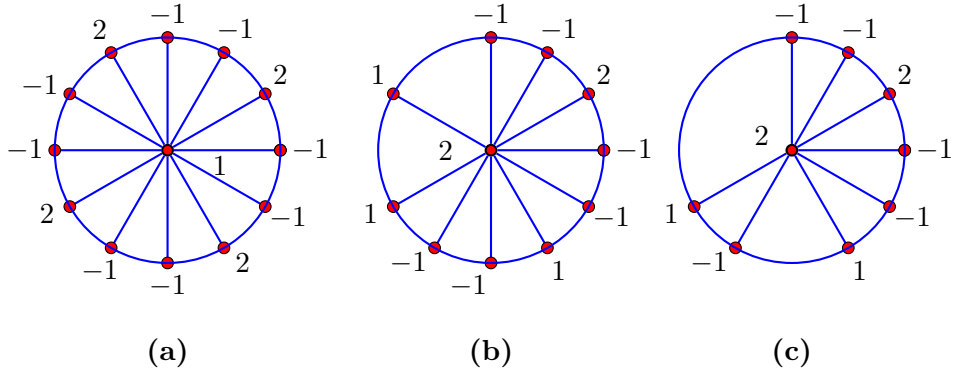
\captionof{figure}{Signed Roman domination labeling of $W_{12}$, $W_{10}$ and $W_8$.\label{fig:w12}}
\end{center}
%%%%%%%%%%%%%%%%%%%%%%%%%%%%%%%%%%
%%%%%%%%%%%%%%%%%%%%%%%%%%%%%%%%%%
%%%%%%%%%%%%%%%%%%%%%%%%%%%%%%%%%%
%%%%%%%%%%%%%%%%%%%%%%%%%%%%%%%%%%  
%%%%%%%%%%%%%%%%%%%%%%%%%%%%%%%%%%
\begin{center}
\begin{tikzpicture}
[inner sep=0.5mm, place/.style={circle,draw=black,fill=red,thick}]
%%%%%%%
\node[place] (v_1) at (-5,0) [label=below:$-1$] {};
\node[place] (v_2) at (-4,0) [label=below:$1$] {}edge [-,thick,blue](v_1);
\node[place] (v_0) at (-4.5,1) [label=above:$2$] {}edge [-,thick,blue](v_1) edge [-,thick,blue](v_2);
\node (dots) at (-4.5,-1) [label=center:{\bf (a)}]{};
%%%%%%%
\node[place] (v_1) at (-3,0) [label=below:$-1$] {};
\node[place] (v_2) at (-2,0) [label=below:$1$] {}edge [-,thick,blue](v_1);
\node[place] (v_3) at (-1,0) [label=below:$-1$] {}edge [-,thick,blue](v_2);
\node[place] (v_4) at (0,0) [label=below:$1$] {}edge [-,thick,blue](v_3);
\node[place] (v_0) at (-1.5,1) [label=above:$2$] {}edge [-,thick,blue](v_1) edge [-,thick,blue](v_2) edge [-,thick,blue](v_3) edge [-,thick,blue](v_4);
\node (dots) at (-1.5,-1) [label=center:{\bf (b)}]{};
%%%%%%%
\node[place] (v_1) at (1,0) [label=below:$-1$] {};
\node[place] (v_2) at (2,0) [label=below:$1$] {}edge [-,thick,blue](v_1);
\node[place] (v_3) at (3,0) [label=below:$-1$] {}edge [-,thick,blue](v_2);
\node[place] (v_4) at (4,0) [label=below:$1$] {}edge [-,thick,blue](v_3);
\node[place] (v_5) at (5,0) [label=below:$-1$] {}edge [-,thick,blue](v_4);
\node[place] (v_0) at (3,1) [label=above:$2$] {}edge [-,thick,blue](v_1) edge [-,thick,blue](v_2) edge [-,thick,blue](v_3) edge [-,thick,blue](v_4) edge [-,thick,blue](v_5);
\node (dots) at (3,-1) [label=center:{\bf (c)}]{};
%%%%%%%
\end{tikzpicture}
\captionof{figure}{Signed Roman domination labeling on $F_2$, $F_4$ and $F_5$, respectively.\label{fig:F2F4F5}}
\end{center}
%%%%%%%%%%%%%%%%%%%%%%%%%%%%%%%%%%  F_12  F_10  F_8
%%%%%%%%%%%%%%%%%%%%%%%%%%%%%%%%%%
\begin{center}
\begin{tikzpicture}
[inner sep=0.5mm, place/.style={circle,draw=black,fill=red,thick}]
%%%%%%%%%%%%%%%%%%%%  F_12
\node[place] (v_0) at (-4.3,0) {};      % [label=left:$v_0$] {};
\draw[blue,thick] (v_0) -- +(0:1.5cm);
\draw[blue,thick] (v_0) -- +(30:1.5cm);
\draw[blue,thick] (v_0) -- +(60:1.5cm);
\draw[blue,thick] (v_0) -- +(90:1.5cm);
\draw[blue,thick] (v_0) -- +(120:1.5cm);
\draw[blue,thick] (v_0) -- +(150:1.5cm);
\draw[blue,thick] (v_0) -- +(180:1.5cm);
\draw[blue,thick] (v_0) -- +(210:1.5cm);
\draw[blue,thick] (v_0) -- +(240:1.5cm);
\draw[blue,thick] (v_0) -- +(270:1.5cm);
\draw[blue,thick] (v_0) -- +(300:1.5cm);
\draw[blue,thick] (v_0) -- +(330:1.5cm);
\filldraw[red,draw=black] (v_0)  ++(0:1.5cm) circle (2.2pt);
\filldraw[red,draw=black] (v_0)  ++(30:1.5cm) circle (2.2pt);
\filldraw[red,draw=black] (v_0)  ++(60:1.5cm) circle (2.2pt);
\filldraw[red,draw=black] (v_0)  ++(90:1.5cm) circle (2.2pt);
\filldraw[red,draw=black] (v_0)  ++(120:1.5cm) circle (2.2pt);
\filldraw[red,draw=black] (v_0)  ++(150:1.5cm) circle (2.2pt);
\filldraw[red,draw=black] (v_0)  ++(180:1.5cm) circle (2.2pt);
\filldraw[red,draw=black] (v_0)  ++(210:1.5cm) circle (2.2pt);
\filldraw[red,draw=black] (v_0)  ++(240:1.5cm) circle (2.2pt);
\filldraw[red,draw=black] (v_0)  ++(270:1.5cm) circle (2.2pt);
\filldraw[red,draw=black] (v_0)  ++(300:1.5cm) circle (2.2pt);
\filldraw[red,draw=black] (v_0)  ++(330:1.5cm) circle (2.2pt);
\filldraw[red,draw=black] (v_0)  ++(360:1.5cm) circle (2.2pt);
\draw[blue,thick] (v_0)  ++(120:1.5cm) arc (120:450:1.5cm);
\node (dots) at (-4,1.8) [label=center:$-1$]{};
\node (dots) at (-3.4,1.6) [label=center:$-1$]{};
\node (dots) at (-2.8,1) [label=center:$2$]{};
\node (dots) at (-2.4,0) [label=center:$-1$]{};
\node (dots) at (-2.7,-.9) [label=center:$-1$]{};
\node (dots) at (-3.3,-1.5) [label=center:$2$]{};
\node (dots) at (-4.3,-1.8) [label=center:$-1$]{};
\node (dots) at (-5.2,-1.6) [label=center:$-1$]{};
\node (dots) at (-5.9,-.85) [label=center:$2$]{};
\node (dots) at (-6.2,0) [label=center:$-1$]{};
\node (dots) at (-6,0.85) [label=center:$-1$]{};
\node (dots) at (-5.2,1.6) [label=center:$2$]{};
\node (dots) at (-3.6,-0.2) [label=center:$1$]{};
\node (dots) at (-4.3,-2.7) [label=center:{\bf (a)}]{};
%%
%%%%%%%%%%%%%%%
%%%%%%%%%%%%%%%%%%%%  F_10
\node[place] (v_0) at (0,0) {};      % [label=left:$v_0$] {};
\draw[blue,thick] (v_0) -- +(0:1.5cm);
\draw[blue,thick] (v_0) -- +(30:1.5cm);
\draw[blue,thick] (v_0) -- +(60:1.5cm);
\draw[blue,thick] (v_0) -- +(90:1.5cm);
\draw[blue,thick] (v_0) -- +(150:1.5cm);
%\draw[blue,thick] (v_0) -- +(150:1.5cm);
%\draw[blue,thick] (v_0) -- +(180:1.5cm);
\draw[blue,thick] (v_0) -- +(210:1.5cm);
\draw[blue,thick] (v_0) -- +(240:1.5cm);
\draw[blue,thick] (v_0) -- +(270:1.5cm);
\draw[blue,thick] (v_0) -- +(300:1.5cm);
\draw[blue,thick] (v_0) -- +(330:1.5cm);
\filldraw[red,draw=black] (v_0)  ++(0:1.5cm) circle (2.2pt);
\filldraw[red,draw=black] (v_0)  ++(30:1.5cm) circle (2.2pt);
\filldraw[red,draw=black] (v_0)  ++(60:1.5cm) circle (2.2pt);
\filldraw[red,draw=black] (v_0)  ++(90:1.5cm) circle (2.2pt);
\filldraw[red,draw=black] (v_0)  ++(150:1.5cm) circle (2.2pt);
%\filldraw[red,draw=black] (v_0)  ++(150:1.5cm) circle (2.2pt);
%\filldraw[red,draw=black] (v_0)  ++(180:1.5cm) circle (2.2pt);
\filldraw[red,draw=black] (v_0)  ++(210:1.5cm) circle (2.2pt);
\filldraw[red,draw=black] (v_0)  ++(240:1.5cm) circle (2.2pt);
\filldraw[red,draw=black] (v_0)  ++(270:1.5cm) circle (2.2pt);
\filldraw[red,draw=black] (v_0)  ++(300:1.5cm) circle (2.2pt);
\filldraw[red,draw=black] (v_0)  ++(330:1.5cm) circle (2.2pt);
\filldraw[red,draw=black] (v_0)  ++(360:1.5cm) circle (2.2pt);
\draw[blue,thick] (v_0)  ++(150:1.5cm) arc (150:450:1.5cm);
\node (dots) at (0,1.8) [label=center:$-1$]{};
\node (dots) at (.9,1.6) [label=center:$-1$]{};
\node (dots) at (1.5,1) [label=center:$2$]{};
\node (dots) at (1.9,0) [label=center:$-1$]{};
\node (dots) at (1.6,-.9) [label=center:$-1$]{};
\node (dots) at (.9,-1.6) [label=center:$1$]{};
\node (dots) at (-.3,-1.8) [label=center:$-1$]{};
\node (dots) at (-1.1,-1.5) [label=center:$-1$]{};
\node (dots) at (-1.6,-.85) [label=center:$1$]{};
%\node (dots) at (-2.2,0) [label=center:$-1$]{};
%\node (dots) at (-2,0.85) [label=center:$-1$]{};
\node (dots) at (-1.5,1) [label=center:$1$]{};
\node (dots) at (-.6,0) [label=center:$2$]{};
\node (dots) at (0,-2.7) [label=center:{\bf (b)}]{};
%%%%%%%
%%
%%%%%%%%%%%%%%%
%%%%%%%%%%%%%%%%%%%%  F_8
\node[place] (v_0) at (4,0) {};      % [label=left:$v_0$] {};
\draw[blue,thick] (v_0) -- +(0:1.5cm);
\draw[blue,thick] (v_0) -- +(30:1.5cm);
\draw[blue,thick] (v_0) -- +(60:1.5cm);
\draw[blue,thick] (v_0) -- +(90:1.5cm);
%\draw[blue,thick] (v_0) -- +(150:1.5cm);
%\draw[blue,thick] (v_0) -- +(150:1.5cm);
%\draw[blue,thick] (v_0) -- +(180:1.5cm);
\draw[blue,thick] (v_0) -- +(210:1.5cm);
\draw[blue,thick] (v_0) -- +(240:1.5cm);
%\draw[blue,thick] (v_0) -- +(270:1.5cm);
\draw[blue,thick] (v_0) -- +(300:1.5cm);
\draw[blue,thick] (v_0) -- +(330:1.5cm);
\filldraw[red,draw=black] (v_0)  ++(0:1.5cm) circle (2.2pt);
\filldraw[red,draw=black] (v_0)  ++(30:1.5cm) circle (2.2pt);
\filldraw[red,draw=black] (v_0)  ++(60:1.5cm) circle (2.2pt);
\filldraw[red,draw=black] (v_0)  ++(90:1.5cm) circle (2.2pt);
%\filldraw[red,draw=black] (v_0)  ++(150:1.5cm) circle (2.2pt);
%\filldraw[red,draw=black] (v_0)  ++(150:1.5cm) circle (2.2pt);
%\filldraw[red,draw=black] (v_0)  ++(180:1.5cm) circle (2.2pt);
\filldraw[red,draw=black] (v_0)  ++(210:1.5cm) circle (2.2pt);
\filldraw[red,draw=black] (v_0)  ++(240:1.5cm) circle (2.2pt);
%\filldraw[red,draw=black] (v_0)  ++(270:1.5cm) circle (2.2pt);
\filldraw[red,draw=black] (v_0)  ++(300:1.5cm) circle (2.2pt);
\filldraw[red,draw=black] (v_0)  ++(330:1.5cm) circle (2.2pt);
\filldraw[red,draw=black] (v_0)  ++(360:1.5cm) circle (2.2pt);
\draw[blue,thick] (v_0)  ++(210:1.5cm) arc (210:450:1.5cm);
\node (dots) at (4,1.8) [label=center:$-1$]{};
\node (dots) at (4.9,1.6) [label=center:$-1$]{};
\node (dots) at (5.5,1) [label=center:$2$]{};
\node (dots) at (5.9,0) [label=center:$-1$]{};
\node (dots) at (5.6,-.9) [label=center:$-1$]{};
\node (dots) at (4.9,-1.6) [label=center:$1$]{};
%\node (dots) at (3.7,-1.8) [label=center:$-1$]{};
\node (dots) at (2.9,-1.5) [label=center:$-1$]{};
\node (dots) at (2.4,-.85) [label=center:$1$]{};
%\node (dots) at (-2.2,0) [label=center:$-1$]{};
%\node (dots) at (-2,0.85) [label=center:$-1$]{};
%\node (dots) at (-1.5,1) [label=center:$1$]{};
\node (dots) at (3.6,0.2) [label=center:$2$]{};
\node (dots) at (4,-2.7) [label=center:{\bf (c)}]{};
%%%%%%%
\end{tikzpicture}

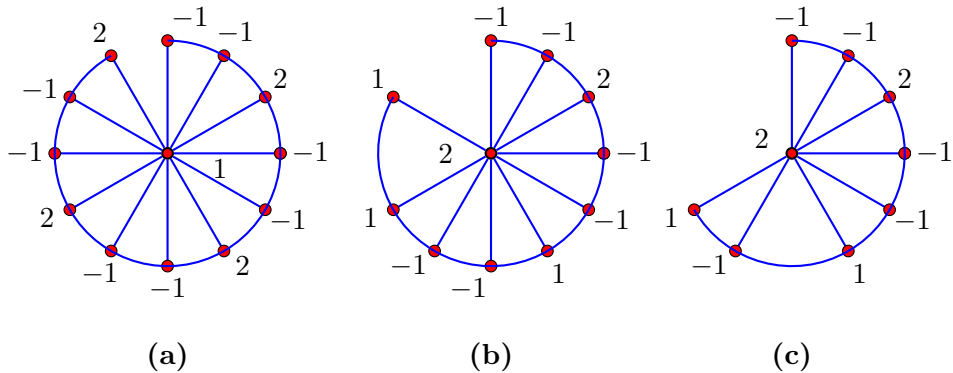
\captionof{figure}{Signed Roman domination labeling on $F_{12}$, $F_{10}$ and $F_8$, respectively.\label{fig:F12F10F8}}
\end{center}
%%%%%%%%%%%%%%%%%%%%%%%%%%%%%%%%%%
%%%%%%%%%%%%%%%%%%%%%%%%%%%%%%%%%%
%%%%%%%%%%%%%%%%%%%%%%%%%%%%%%%%%%
%%%%%%%%%%%%%%%%%%%%%%%%%%%%%%%%%%
%%%%%%%%%%%%%%%%%%%%%%%%%%%%%%%%%%
Structures of  $F_n$ and $W_n$ are similar. 
This similarity helps us to provide signed Roman domination functions on $F_n$ using what we construct for $W_n$. 

\begin{thm} \label{theorem:fan}
Let $F_n=K_1\vee P_n$ be a fan of order $n+1$. Then
\begin{eqnarray*}
\gamma_{_{sR}}(F_n)=\left\{ \begin{array}{ll} 2 & n\in\{2,4\} \\ 1 & n\notin\{2,4\}. \end{array}   \right.
\end{eqnarray*}
\end{thm}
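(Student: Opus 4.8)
The plan is to establish the lower bound uniformly and then treat the exceptional values $n\in\{2,4\}$ and the generic values $n\notin\{2,4\}$ separately, exploiting the fact that $F_n=K_1\vee P_n$ differs from the wheel $W_n=K_1\vee C_n$ only by one rim edge. Since $\Delta(F_n)=n=|V(F_n)|-1$, Lemma \ref{MaxDeg} gives $\gamma_{_{sR}}(F_n)\geq 1$ for every $n$, so the whole problem reduces to deciding when this bound is attained.

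For the exceptional cases I argue directly. Since $P_2=K_2$ we have $F_2=K_1\vee K_2=K_3$, whence $\gamma_{_{sR}}(F_2)=\gamma_{_{sR}}(K_3)=2$ by the values recorded in the introduction. For $F_4$ I rule out a SRDF of weight $1$ by a finite analysis: writing $v_0$ for the centre and $S=\sum_{i=1}^4 f(v_i)$, the closed-neighbourhood condition at $v_0$ forces $f(v_0)=1-S\in\{-1,1,2\}$, so $S\in\{-1,0,2\}$. Each value of $S$ pins down the multiset of path labels ($S=-1$ gives $\{-1,-1,-1,2\}$; $S=0$ gives $\{-1,-1,1,1\}$; $S=2$ gives $\{-1,1,1,1\}$ or $\{-1,-1,2,2\}$), and in every case one checks that either an endpoint inequality $f(N_{F_4}[v_1])\geq1$ or $f(N_{F_4}[v_4])\geq1$ fails, or condition (b) is violated for lack of a $2$. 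With the weight-$2$ function of Figure \ref{fig:F2F4F5}(b) this gives $\gamma_{_{sR}}(F_4)=2$.

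For $n\notin\{2,4\}$ I transport the weight-$1$ SRDF on $W_n$ provided by Theorem \ref{theorem:wheel} (which applies precisely because $n\neq4$) onto $F_n$. The key observation is that $F_n$ arises from $W_n$ by deleting one rim edge $v_jv_{j+1}$, an operation that alters only the two closed neighbourhoods $N[v_j]$ and $N[v_{j+1}]$. If the deleted edge joins two vertices that both carry the label $-1$, then
$$f(N_{F_n}[v_j])=f(N_{W_n}[v_j])-f(v_{j+1})=f(N_{W_n}[v_j])+1\geq 2,$$
and symmetrically at $v_{j+1}$, so condition (a) survives the deletion and is untouched at all other vertices. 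It therefore suffices to exhibit, in each of the wheel patterns (\ref{case1})--(\ref{case4}), a pair of adjacent rim vertices both labeled $-1$: for odd $n$ these are $v_n$ and $v_1$, and for even $n$ a consecutive $(-1,-1)$ block occurs inside the pattern. Deleting that edge yields a weight-$1$ SRDF on $F_n$ and hence $\gamma_{_{sR}}(F_n)=1$.

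The one delicate point is condition (b) in the single wheel construction whose centre is labeled $1$ instead of $2$, namely $n$ even with $n\equiv0\pmod 3$ (formula (\ref{case2})). There the centre can no longer witness the $2$ required by every $-1$ vertex, so after the edge deletion each of the two $(-1)$-endpoints must still see a $2$ among its surviving rim neighbours. I resolve this by deleting the edge between a consecutive pair $v_{3t+1},v_{3t+2}$ of $(-1)$-vertices, which in (\ref{case2}) is flanked by the $2$-labeled vertices $v_{3t}$ and $v_{3t+3}$; these remain rim neighbours of the endpoints in $F_n$, so condition (b) is preserved (and no deleted endpoint was ever a $2$-witness for another vertex, since both are labeled $-1$). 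In all other patterns the centre is labeled $2$, making condition (b) automatic and leaving only the routine endpoint inequalities of condition (a) to verify.
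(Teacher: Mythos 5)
Your proof is correct, and it is worth recording that it does more than the paper's own argument: it repairs it. Both proofs share the same skeleton --- the lower bound $\gamma_{_{sR}}(F_n)\geq 1$ from Lemma \ref{MaxDeg}, the identification $F_2=K_3$, a finite check for $F_4$ (yours is an actual case analysis over $S=\sum_{i=1}^{4}f(v_i)\in\{-1,0,2\}$, where the paper only appeals to inspection), and the transport of the weight-one wheel labelings of Theorem \ref{theorem:wheel} to the fan. The difference lies in how the transport is done, and it is not cosmetic. The paper uses the functions (\ref{case1})--(\ref{case4}) on $F_n$ verbatim, which amounts to deleting the rim edge $v_nv_1$ of $W_n$; this works in Case 1, where $f(v_1)=f(v_n)=-1$, but it breaks down in all three even cases: with (\ref{case2}) one gets $f(N_{F_n}[v_1])=f(v_0)+f(v_1)+f(v_2)=1-1-1=-1<1$, and with (\ref{case3}) or (\ref{case4}) this sum equals $0<1$, so condition (a) fails at the endpoint $v_1$; the labelings drawn in Figure \ref{fig:F12F10F8} all exhibit exactly this defect at the path endpoint labeled $-1$. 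Your requirement that the deleted edge join two $-1$-labeled vertices is precisely what makes the construction sound, since then both affected closed-neighborhood sums increase by one and no vertex can lose a $2$-witness; likewise your separate verification of condition (b) in Case 2, where the centre carries $1$ rather than $2$, addresses the one genuine subtlety (note only that you should take $t\geq 1$ there, so that both flanking vertices $v_{3t}$ and $v_{3t+3}$ are rim vertices labeled $2$; such a $t$ exists because $n\geq 6$ in that case). So: same overall strategy, but your careful choice of which rim edge to delete is necessary rather than optional, and the paper's write-up of the even cases would need exactly this correction to be valid.
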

\begin{proof}{
Let $V(F_n)=\{v_0,v_1,v_2,...,v_n\}$ and $E(F_n)=\{v_0v_i:~1\leq i\leq n \}\cup \{v_1v_2,v_2v_3,...,v_{n-1}v_n\}$.
Since $\Delta(F_n)=|V(F_n)|-1$, Lemma \ref{MaxDeg} implies that $\gamma_{_{sR}}(F_n)\geq 1$.
$F_2$ is a complete graph with tree vertices and hence $\gamma_{_{sR}}(F_2)=\gamma_{_{sR}}(K_3)=2$. For the case $n=4$ it is not hard to check by inspection that there exists no signed Roman domination function on $F_4$ of weight 1. Figure \ref{fig:F2F4F5} (a) and (b) illustrate a $SRDF$ of weight 2 on $F_2$ and $F_4$, respectively. Thus, for $n\in\{2,4\}$ we have $\gamma_{_{sR}}(F_n)=2$.

To complete the proof it is sufficient to provide a signed Roman domination function of weight 1 on $F_n$ for each $n\notin\{2,4\}$. 
Regards to the different possible cases for $n$ like cases 1 to 4, consider the functions which are defined in the equations \ref{case1}, \ref{case2}, \ref{case3}, and \ref{case4}.
For instance, an optimal SRDF on $F_5$ is depicted in Figure \ref{fig:F2F4F5} (c), where the top vertex is $v_0$ and its below lef one is $v_1$. 
Also, optimal SRDF's on  $F_{12}$, $F_{10}$ and $F_{8}$ are illustrated in Figure \ref{fig:F12F10F8} (a), (b) and (c), respectively (where the central vertex is $v_0$ and the top one is $v_1$).
}\end{proof}
%%%%%%%%%%%%%%%%%%%%%%%%%%%%%%%%%%
%%%%%%%%%%%%%%%%%%%%%%%%%%%%%%%%%%

\begin{thm} \label{friendship1}
Let $m\geq2$ be an integer and $n=2m+1$. Then, the signed Roman domination number of the Friendship graph $Fr_n=K_1\vee (mK_2)$ is given by $\gamma_{_{sR}}(Fr_n)=2$.
\end{thm}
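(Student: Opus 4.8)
The plan is to prove the equality by establishing $\gamma_{_{sR}}(Fr_n)\le 2$ and $\gamma_{_{sR}}(Fr_n)\ge 2$ separately. First I would fix notation: write $v_0$ for the central vertex (the $K_1$) and let the $m$ copies of $K_2$ be the edges $a_ib_i$, $1\le i\le m$, so that each $\{v_0,a_i,b_i\}$ induces a triangle and distinct triangles meet only in $v_0$. Since $v_0$ is adjacent to every other vertex, $\Delta(Fr_n)=|V(Fr_n)|-1$, so Lemma \ref{MaxDeg} already gives $\gamma_{_{sR}}(Fr_n)\ge 1$; the real work is to push the lower bound up to $2$ and to match it from above.

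For the upper bound I would exhibit an explicit SRDF of weight $2$. The natural choice is $f(v_0)=2$ together with $f(a_i)=1$ and $f(b_i)=-1$ on each blade. I would then check the two conditions of Definition \ref{SRDF}: the closed neighborhood of $v_0$ is all of $V$, with total value $2+\sum_i(1-1)=2\ge 1$; the closed neighborhood of any $a_i$ or $b_i$ is exactly the triangle $\{v_0,a_i,b_i\}$, of value $2+1-1=2\ge 1$; and each vertex carrying $-1$ (namely every $b_i$) is adjacent to $v_0$, which carries $2$, so condition (b) holds. Since $w(f)=2+\sum_i(1-1)=2$, this yields $\gamma_{_{sR}}(Fr_n)\le 2$.

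For the lower bound, the key structural observation is that $N_{Fr_n}[a_i]=N_{Fr_n}[b_i]=\{v_0,a_i,b_i\}$, so applying condition (a) to either endpoint of a blade gives the single inequality $f(a_i)+f(b_i)+f(v_0)\ge 1$. Writing $c=f(v_0)$ and $s_i=f(a_i)+f(b_i)$, an arbitrary SRDF $f$ satisfies $w(f)=c+\sum_{i=1}^{m}s_i$ with $s_i\ge 1-c$ for every $i$, and I would finish by a short case analysis on the three possible values of $c$. When $c=-1$, each blade obeys $s_i\ge 2$, so $w(f)\ge -1+2m\ge 3$; when $c=1$, condition (b) forces any $-1$ on a blade to be paired with a $2$ (its only other potential $2$-neighbor $v_0$ now carries $1$), which upgrades $s_i\ge 0$ to $s_i\ge 1$, hence $w(f)\ge 1+m\ge 3$; both estimates use $m\ge 2$.

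The case $c=2$ is the tight one and is where I expect the only real subtlety. Here condition (a) gives merely $s_i\ge -1$, which by itself is too weak. The point to exploit is an integrality fact: a sum of two labels drawn from $\{-1,1,2\}$ can only equal one of $-2,0,1,2,3,4$, and in particular is never $-1$, so $s_i\ge -1$ actually forces $s_i\ge 0$. Consequently $w(f)=2+\sum_i s_i\ge 2$, and this is attained exactly when every $s_i=0$, which is precisely the extremal configuration of the upper-bound construction. Collecting the three cases gives $w(f)\ge 2$ for every SRDF $f$, hence $\gamma_{_{sR}}(Fr_n)\ge 2$, and together with the construction this proves $\gamma_{_{sR}}(Fr_n)=2$.
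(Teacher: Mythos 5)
Your proposal is correct, and it reaches the lower bound by a genuinely different route than the paper. The paper fixes an \emph{optimal} SRDF $f=(V_{-1},V_1,V_2)$ and argues globally about the sizes of the label classes: it first notes $V_{-1}\neq\emptyset$ (otherwise $w(f)\geq n\geq 5$, contradicting optimality together with the weight-$2$ construction), deduces $|V_2|\geq 1$ from condition (b), shows no blade can carry two $-1$'s and that the configuration with one $-1$ per blade plus a $-1$ at the center is impossible, hence $|V_{-1}|\leq m$, and finally computes $w(f)=2|V_2|+|V_1|-|V_{-1}|\geq 2$. You instead decompose the weight blade-by-blade, writing $w(f)=c+\sum_i s_i$ with $c=f(v_0)$ and $s_i=f(a_i)+f(b_i)$, and run a case analysis on $c\in\{-1,1,2\}$: the per-blade inequality $s_i\geq 1-c$ from condition (a), condition (b) to upgrade $s_i\geq 0$ to $s_i\geq 1$ when $c=1$, and the integrality observation that a sum of two labels from $\{-1,1,2\}$ can never equal $-1$ when $c=2$. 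Both arguments rest on the same structural fact (each blade's closed neighborhoods are the triangle through the center), and both use $m\geq 2$ at some point (your cases $c=-1$ and $c=1$; the paper's $n\geq 5$), but your version has two advantages: it lower-bounds \emph{every} SRDF directly, without invoking optimality or the previously established upper bound, and it yields extra information for free, namely that any SRDF with $f(v_0)\neq 2$ has weight at least $3$, so the minimum is attained only with the center labeled $2$ and every blade summing to zero. The paper's counting argument is slightly shorter once its structural claims are in place, but yours is more self-contained and more informative.
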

\begin{proof}{
Lett $V(Fr_n)=\{x\}\cup\{y_i,z_i:~1\leq i\leq m\}$ and $E(Fr_n)=\{xy_i,xz_i:~1\leq i\leq m\}\cup\{y_iz_i:~1\leq i\leq m\}$.
Since $\Delta(Fr_n)=|V(Fr_n)|-1$, Lemma \ref{MaxDeg} implies that $\gamma_{_{sR}}(Fr_n)\geq1$.
Consider the function $g$ defined from $V(Fr_n)$ to the set $\{-1,1,2\}$  as follows.
\begin{eqnarray*}
g(v)=\left\{ \begin{array}{ll}  2 & v=x\\ 1 & v\in\{y_1,y_2,...,y_m\} \\ -1 & v\in\{z_1,z_2,...,z_m\}. \end{array}  \right.
\end{eqnarray*}
%Such a labeling is shown in Figure \ref{fig:friendship} (b) for $n=9$. 
Since $g$ is a $SRDF$ on $Fr_n$, we get $\gamma_{_{sR}}(Fr_n)\leq2$.
Now let $f=(V_{-1},V_1,V_2)$ be a optimal signed Roman domination function on $Fr_n$.
If $V_{-1}=\emptyset$, then $w(f)\geq n\geq 5$, a contradiction.
Hence $|V_{-1}|\geq 1$ and this implies that $|V_2|\geq 1$.
If $f(y_i)=f(z_i)=-1$ for some $i$, then $f(N_{Fr_n}[y_i])\leq0$, which is a contradiction.
Thus, for each $i\in \{1,2,...,m\}$ we have $|V_{-1}\cap \{y_i,z_i\}|\leq 1$ and this implies that $|V_{-1}|\leq m+1$.
If $|V_{-1}|=m+1$, then $|V_{-1}\cap \{y_i,z_i\}|=1$ for each $i\in \{1,2,...,m\}$, and $x\in V_{-1}$.
Hence,  $f(N_{Fr_n}[y_1])=f(y_1)+f(z_1)+f(x)\leq 0$ which is a contradiction.
Therefore, $|V_{-1}|\leq m$ and 
$$\gamma_{_{sR}}(Fr_n)=w(f)=2|V_2|+|V_1|-|V_{-1}|\geq 2\times 1+m\times 1 + m\times (-1)=2.$$
This completes the proof.
}\end{proof}


\begin{thebibliography}{20}

\bibitem{Ahangar} H. A. Ahangar, M. A. Henning, Y. Zhao,  C. L\"{o}wenstein, V. Samodivkin,
   Signed Roman domination in graphs, {\em J. Comb. Optim.},  {\bf 27} (2014) 241-255.

\bibitem{Berge} C. Berge, Graphs and hypergraphs, {\em North Holland}, Amsterdam, (1973).

\bibitem{Furedi} Z. F\"{u}redi and D. Mubayi,  Signed domination in regular graphs and setsystems, {\em J. Combin. Theory Ser. B} {\bf 76} (1999) 223-239.

\bibitem{complexity} M. R. Garey and D. S. Johnson, Computers and Intractability: A Guide to the
theory of NP-completeness, {\em W.H. Freeman}, San Francisco (1979).

\bibitem{Hynes} T. W. Haynes, S. T. Hedetniemi and P. J. Slater, Domination in Graphs, {\em Advanced
Topics, Marcel Dekker}, New York, (1998).

\bibitem{Hening} M. A. Henning and S. T. Hedetniemi, Defending the Roman empirea new strategy, {\em Discrete Math.}, {\bf 266}, (2003), 239251

\bibitem{Ore}  O. Ore,  Theory of graphs, {\em Amer. Math. Soc. Colloq. Publ.}, {\bf 38}, Providence,
(1962).

\bibitem{domatic}  S. M. Sheikholeslami, L. Volkmann,  The signed Roman domatic number of a graph,
{\em Annales Mathematicae et Informaticae}, {\bf 40} (2012) 105112.

\bibitem{Stewart}  I. Stewart,  Defend the Roman Empire, {\em Sci. Amer.}, {\bf 281} (1999) 136-139.

\bibitem{west}  D. B. West, Introduction to graph theory,
 {\em Prentice Hall Inc.}, Upper Saddle River, NJ 07458, Second
Edition (2001).

%%
\end{thebibliography}
\end{document}